\newtheorem{theorem}{\textbf{Theorem}}[section]
\newtheorem{lemma}{\textbf{Lemma}}[section]
\newtheorem{proposition}{\textbf{Proposition}}[section]
\newtheorem{definition}{\textbf{Definition}}[section]
\newtheorem{corollary}{\textbf{Corollary}}[section]
\newtheorem{remark}{\textbf{Remark}}[section]
\def\E{\mathcal{E}}
\renewcommand{\div}{ \operatorname{div} }
\DeclareMathOperator*{\argmin}{{\rm argmin}}
\renewenvironment{proof}{\textbf{Proof. }}{\hfill$\Box$
\\}
\title{A non linear approximation method for solving high dimensional partial differential equations: Application in Finance.}
\author{Jos\'e Infante Acevedo and Tony Leli\`evre\\
  Universit\'e Paris-Est, CERMICS,\\ Ecole des Ponts,   6-8 avenue
Blaise Pascal,  77455
Marne-la-vall\'{e}e, France\\  {\tt jose-infante.acevedo@cermics.enpc.fr, \ lelievre@cermics.enpc.fr}}
\begin{document}  
\maketitle
{\noindent {\bf Abstract:} We study an algorithm which has been proposed
  in~\cite{ammar-mokdad-chinesta-keunings-06, nouy-07} to solve
  high-dimensional partial differential equations. The idea is to represent
  the solution as a sum of tensor products and to compute iteratively the
  terms of this sum. This algorithm is related to the so-called greedy
  algorithm introduced by Temlyakov in~\cite{temlyakov-08}.
  In this paper, we investigate the application of the greedy algorithm in
  finance and more precisely to the option pricing problem. We approximate the
  solution to the Black-Scholes equation and we propose a
  variance reduction method. In numerical experiments, we obtain results for
  up to $10$ underlyings. Besides, the proposed variance reduction method
  permits an important reduction of the variance in comparison
with a classical Monte Carlo method.}\\

{\it Key words:} Greedy algorithms, Black-Scholes partial differential equation \\
 
{\noindent {\bf Acknowledgements:} Jos\'e Infante Acevedo is grateful to AXA Research~Fund for his doctoral fellowship.}
%\newpage
%\tableofcontents
%\newpage

\section*{Introduction}

Many problems of interest for various applications (for example material
sciences and finance) involve high-dimensional partial differential equations
(PDEs). The typical example in finance is the pricing of a basket option,
which can be obtained by solving the Black-Scholes PDE with dimension the number of underlying assets.

We propose to investigate an algorithm which has been recently proposed by
Chinesta~{\it et al.}~\cite{ammar-mokdad-chinesta-keunings-06} for solving
high-dimensional Fokker-Planck equations in the context of kinetic models for
polymers, and by Nouy~{\it et al.}~\cite{nouy-12} in uncertainty
quantification framework based on previous works by
Ladev\`eze~\cite{ladeveze-99}. This approach is also studied
in~\cite{le-bris-lelievre-maday-09} to try to circumvent the curse of
dimensionality for the Poisson problem in high-dimension. This approach is a
nonlinear approximation method called below the {\it greedy
  algorithm} because it is related to the so-called greedy algorithms
introduced in nonlinear approximation theory, see for
example~\cite{temlyakov-08}. The main idea is to represent the solution as a
sum of tensor products (referred to as a {\em separated representation} in the following): 

\begin{eqnarray}
  \label{eq:sol_pgd}
  u(x_1,\ldots ,x_d)&=&\sum_{k\geq 1}r^1_k(x_1)r^2_k(x_2)\ldots r^d_k(x_d) \nonumber \\
&=&\sum_{k\geq 1}\left(r^1_k\otimes r^2_k\otimes\ldots\otimes r^d_k\right)(x_1,\ldots ,x_d)
\end{eqnarray}

\noindent and to compute iteratively each term of this sum using a greedy
algorithm. This greedy algorithm can be applied to any PDE which admits a
variational interpretation as a minimization problem. The practical interest
of this algorithm has been demonstrated in various
contexts (see for example~\cite{ammar-mokdad-chinesta-keunings-06} for applications in fluid mechanics).

Our contribution is to complete the first application of this algorithm in
finance, investigating the interest of this approach for option pricing. In
this work, our aim is to study the problem of pricing vanilla basket options
of European type using two numerical methods: first a discretization technique
for the Black-Scholes PDE and a variance reduction method for the pricing of the same type of financial products.

For option pricing, we will discuss in particular the key points to be solved to address problems
in finance, compared to the situations studied
in~\cite{le-bris-lelievre-maday-09} or
in~\cite{ammar-mokdad-chinesta-keunings-06}, that is, the treatment of the non
zero boundary conditions and the approximation of the solution to the
Black-Scholes PDE as a sequence of minimization problems. We will study also the practical implementation of the algorithm. We will not
solve the minimization problems associated to the PDE, but the first-order
optimality conditions of these minimization problems, namely the Euler
equation. This leads to a system of equations where the number of degrees of
freedom does not grow exponentially with respect to the dimension, and this
fact will be very important in order to attain high-dimensional frameworks in
practical applications. More precisely, the Euler equation writes as a system
of $d$ nonlinear equations, where $d$ is the considered dimension. The maximum
dimension that can be treated by this technique is limited by the
{\em non-linearity} of the system of $d$ equations that has to be solved.

The variance reduction method relies on the backward Kolmogorov equation which
yields an exact control variate. We propose to solve the high-dimensional
Kolmogorov equation using the greedy algorithm. This yields an efficient
pricing method which combines deterministic and stochastic techniques.

We would like to point out that in order to circumvent the curse of
dimensionality using the greedy algorithm, the initial condition of the
Black-Scholes equation has to be set out in separated representation with
respect to the different coordinates, namely a sum of tensor products. As the initial condition is not always expressed in a
separated representation, we first need to investigate the problem of
approximating the initial condition by a sum of tensor products. This
problem will be solved using again the greedy algorithm. We will provide examples to
illustrate that this approximation is suitable.

Other deterministic techniques have been applied to solve the Black-Scholes PDE in a
high-dimensional framework. Classical methods such as finite differences and
finite elements are limited in their application when the dimension increases
(typically $d\leq 4$), because the number of degrees of freedom increases exponentially with respect
to the dimension and rapidly exceed the limited storage capacity. Financial
applications of the sparse tensor product methods have been studied by Pommier
in~\cite{pommier-08}. These sparse methods also use the representation of the
solution as a sum of tensor products, and assume that the solution is regular
enough to obviate fine discretizations in each direction. In practice, this
method may be difficult to apply for reasons such as the lack of regularity of
the solution and the difficulty to implement it.

In our numerical experiments, the greedy algorithm gives results for up to $10$ underlyings; that
means the dimension $d$ is equal to $10$. To the best of our knowledge, this
is higher than results obtained using other deterministic approaches, such as
the tensor product method, for which examples up to the dimension
$d=5$ have been reported in the literature, see~\cite{pommier-08}. In addition, the variance reduction method that we are
proposing permits the variance to be reduced in comparison with a classic Monte Carlo method. 

As future research, we have in mind to generalize the method to American options. This has a theoretical counterpart, namely the
generalization of greedy algorithms to free boundary problems that has already
been analyzed in~\cite{cances-ehrlacher-lelievre-10}.

The plan of the paper is the following. In
Section~\ref{section:presentation_greedy_algo}, we introduce the general
setting for the greedy algorithm and we give some
theoretical results that have been proved in the literature and that ensure
the convergence of the greedy
algorithm. Section~\ref{section:implementation_algorithm} will discuss the practical implementation of the
greedy algorithm in the case of the approximation of a function by a sum of
tensor products. Following this, in
Section~\ref{section_initial_condition_svd}
and~\ref{section:pricing_basket_with_payoff}, we will present results and
applications for the approximation of a basket put option. The purpose of
Section~\ref{section:black_scholes_pde_gral} is to apply the greedy algorithm
to solve the Black-Scholes PDE. After introducing the weak formulation
of this equation in Section~\ref{section:black_scholes_pde}, we treat the difficulties that arise when applying the greedy algorithm such
as posing the problem in a bounded domain
(Section~\ref{subsection:bounded_domain}) and recasting the PDE as a
minimization problem (Section~\ref{subsection:imex_min_pb}). The final
section (Section~\ref{section:numerical_results}) contains numerical results
for the solution of the Black-Scholes PDE and for the variance reduction method.

\section{Greedy algorithms for high dimensional problems}
\label{section:presentation_greedy_algo}

In this section, we define a general framework for the greedy algorithm that
we will use to solve the high-dimensional problems studied in this paper. 

The bottom line of deterministic approaches for high-dimensional problems is to represent the solutions as
linear combinations of tensor products of one-dimensional functions as
in~\eqref{eq:sol_pgd}. If the number of terms in the expansion remains small,
this enables us to approximate efficiently the solution, avoiding the curse of dimensionality.

The greedy algorithm proposed
in~\cite{ammar-mokdad-chinesta-keunings-06,le-bris-lelievre-maday-09,nouy-12}
is based on two important points. The first one is that we need to recast the original problem (in our case, this is the option pricing problem) as a minimization problem:

\begin{equation}
\label{eq:min_pb}
  u = \argmin_{v \in V} \E(v),
\end{equation}

\noindent where $\E:V\mapsto\mathbb{R}$ is a functional with a unique global
minimizer $u\in V$ with $V$ a Hilbert space. For example, for approximating
the solution to a Poisson problem with homogeneous Dirichlet boundary
conditions, one would consider
$\mathcal{E}(v)=\frac{1}{2}\int_{\mathcal{X}}\left|\nabla
  v\right|^2-\int_{\mathcal{X}}fv$ and $V=H^1_0(\mathcal{X}^d)$ with
$\mathcal{X}$ a bounded one-dimensional domain and $d$ large (see~\cite{le-bris-lelievre-maday-09}).

The second point is to look iteratively for the best tensor product in the
expansion of the solution as a sum of tensor products of lower-dimensional functions

\begin{equation}
  \label{eq:greedy_sol}
  u_n(x_1,x_2,\ldots ,x_d)=\sum^n_{k=1}r^1_k\otimes r^2_k\ldots\otimes r^d_k(x_1,\ldots ,x_d)
\end{equation}

\noindent where for all $i=1,\ldots ,d$ and $k=1,\ldots ,n$, the functions $r^i_k\in
V_{x_i}$, with $V_{x_i}$ Hilbert spaces of functions depending on the one-dimensional variable $x_i$. This
sequential search for the terms in the sum~\eqref{eq:greedy_sol} is related to the so-called greedy algorithms introduced in
the nonlinear approximation theory by Temlyakov in~\cite{temlyakov-08} and by
De Vore and Temlyakov in~\cite{devore-temlyakov-96}.

In what follows, we assume that $V,V_{x_1},V_{x_2},\ldots ,V_{x_d}$ are
Hilbert spaces such that
\begin{itemize}
\item[(H1)] $\mbox{Vect} \{ r^1 \otimes r^2\otimes ,\ldots\otimes r^d, r^1 \in
  V_{x_1}, r^2 \in V_{x_2},\ldots ,r^d\in V_{x_d}\} \subset V$ is dense.
\end{itemize}

To compute $u_n$ in the separated form~\eqref{eq:greedy_sol}, $u_n$ being the
approximation of $u$ solution of the problem~\eqref{eq:min_pb}, we define the greedy algorithm
as follows,

Iterating for all $n\geq 1$:

\begin{equation}
  \label{def_greedy_algo}
(r^1_n,r^2_n,\ldots ,r^d_n) \in \argmin_{r^1 \in V_{x_1},\,r^2 \in
  V_{x_2},\ldots ,r^d\in V_{x_d}} \E\left( \sum_{k=1}^{n-1} r^1_k \otimes
  r^2_k \otimes\ldots\otimes r^d_k + r^1 \otimes r^2\otimes\ldots\otimes r^d \right).
\end{equation}

The following result given in~\cite{le-bris-lelievre-maday-09} ensures the
convergence of the greedy algorithm defined in~\eqref{def_greedy_algo} for a functional $\E$ of the following form

\begin{equation}
  \label{eq:functional_E_form}
\E(v)=\|u-v\|^2_V.  
\end{equation}

\begin{theorem}
  Let us assume that the assumption (H1) holds and that the functional $\E$
  has the form~\eqref{eq:functional_E_form}. Then,
  \begin{equation}
    \label{eq:cv_greedy}
    \| u_n-u \|_V\underset{n\rightarrow\infty}\longrightarrow 0.
  \end{equation}
\end{theorem}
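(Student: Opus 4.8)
The plan is to recognize the minimization~\eqref{def_greedy_algo} with $\E(v)=\|u-v\|_V^2$ as the pure greedy (matching pursuit) algorithm associated with the dictionary $\mathcal{D}=\{r^1\otimes r^2\otimes\cdots\otimes r^d : r^i\in V_{x_i}\}$ of rank-one tensors. The two structural facts I would rely on are that $\mathcal{D}$ is a cone (stable under multiplication by a scalar, which can be absorbed into any one factor) and that $\mbox{Vect}(\mathcal{D})$ is dense in $V$ by (H1). Writing $z_n:=u-u_n$ for the residual and $t_n:=r^1_n\otimes\cdots\otimes r^d_n$ for the $n$-th increment, one has $\E(u_{n-1}+t)=\|z_{n-1}-t\|_V^2$, so step $n$ selects $t_n$ as a best approximation of $z_{n-1}$ by an element of $\mathcal{D}$, with $z_n=z_{n-1}-t_n$ and $z_0=u$.

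First I would exploit the cone structure to obtain an orthogonality relation: since $\lambda t_n\in\mathcal{D}$ for every $\lambda\in\R$, the map $\lambda\mapsto\|z_{n-1}-\lambda t_n\|_V^2$ is minimal at $\lambda=1$, whence $\langle z_n,t_n\rangle_V=0$ and, by the Pythagorean identity, $\|z_n\|_V^2=\|z_{n-1}\|_V^2-\|t_n\|_V^2$. Consequently $\|z_n\|_V$ is nonincreasing, hence converges to some $a\geq 0$, and $\sum_{n\geq 1}\|t_n\|_V^2=\|u\|_V^2-a^2<\infty$, so that $\|t_n\|_V\to 0$. Optimizing the scalar multiple also shows that $t_n$ realizes the best correlation, i.e. $\|t_n\|_V=M(z_{n-1})$, where $M(w):=\sup\{|\langle w,g\rangle_V| : g\in\mathcal{D},\ \|g\|_V=1\}$; in particular $M(z_n)=\|t_{n+1}\|_V\to 0$.

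The remaining and most delicate point is to upgrade $\|z_n\|_V\to a$ into $a=0$, and I would do this by combining two estimates. On the one hand, approximating $u$ by a finite linear combination $w=\sum_j c_j g_j\in\mbox{Vect}(\mathcal{D})$ with $\|u-w\|_V<\varepsilon$ (possible by (H1)) and bounding $|\langle z_n,w\rangle_V|\leq(\sum_j|c_j|)\,M(z_n)$ yields $\langle z_n,u\rangle_V\to 0$. On the other hand, expanding $u_n=\sum_{k=1}^n t_k$ and using $|\langle z_n,t_k\rangle_V|\leq\|t_k\|_V\,M(z_n)=\|t_k\|_V\,\|t_{n+1}\|_V$ gives, via Cauchy--Schwarz,
\begin{equation*}
|\langle z_n,u_n\rangle_V|\;\leq\;\|t_{n+1}\|_V\sum_{k=1}^n\|t_k\|_V\;\leq\;\|u\|_V\,\sqrt{n}\,\|t_{n+1}\|_V.
\end{equation*}
Since $\sum_n\|t_n\|_V^2<\infty$ forces $\liminf_n n\,\|t_n\|_V^2=0$, there is a subsequence along which $\sqrt{n}\,\|t_{n+1}\|_V\to 0$, hence $\langle z_n,u_n\rangle_V\to 0$ on it. Combining this with the identity $\langle z_n,u_n\rangle_V=\langle z_n,u\rangle_V-\|z_n\|_V^2$ and the two limits just obtained yields $0=0-a^2$ along the subsequence, so $a=0$, which is exactly~\eqref{eq:cv_greedy}.

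I expect the main obstacle to be precisely this last passage from the weak-type control $M(z_n)\to 0$ (which only gives $z_n\rightharpoonup 0$) to strong convergence of the norm: monotonicity and $M(z_n)\to 0$ are by themselves insufficient, and the greedy recursion must be used essentially, here through the square-summability of the increments and the subsequence on which $\sqrt{n}\,\|t_{n+1}\|_V\to 0$. A secondary technical point, which I would fold into the standing hypotheses, is the existence of the minimizer in~\eqref{def_greedy_algo}, i.e. that the supremum defining $M(z_{n-1})$ is attained in $\mathcal{D}$.
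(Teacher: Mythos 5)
Your argument is correct and complete: the orthogonality relation $\langle z_n,t_n\rangle_V=0$ from the cone structure, the resulting square-summability of the increments, the identification $\|t_{n+1}\|_V=M(z_n)$, and the final passage to $a=0$ via the subsequence on which $\sqrt{n}\,\|t_{n+1}\|_V\to 0$ together constitute the classical Jones/DeVore--Temlyakov proof of convergence of the pure greedy algorithm. The paper itself gives no proof and simply cites~\cite{le-bris-lelievre-maday-09}, where essentially this same argument is carried out, so your route coincides with the source. The one point you defer --- the attainment of the supremum defining $M(z_{n-1})$, i.e.\ the existence of the minimizer in~\eqref{def_greedy_algo} --- is not folded into the hypotheses in the cited work but is proved there as a separate lemma (weak closedness of the set of rank-one tensors plus coercivity of $\E$ along minimizing sequences); as stated, your proof is conditional on that existence, which is the only substantive gap relative to a fully self-contained treatment.
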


An estimate of the error is also proposed
in~\cite{le-bris-lelievre-maday-09}. To state this result, we need to
introduce the functional space adapted to the convergence analysis

\begin{equation}
  \label{eq:definition_set_cv}
  \mathcal{A}^1=\left\{u=\sum^{+\infty}_{k=1}r^1_k\otimes
    r^2_k\otimes\ldots\otimes r^d_k,\text{ where }r^i_k\in V_{x_i},i=1,\ldots
    ,d,\sum^{+\infty}_{k=1}\|r^1_k\otimes r^2_k\ldots\otimes r^d_k\|_V <+\infty\right\}
\end{equation}

and the associated norm which is

\begin{equation}
  \label{eq:norm_set_cv}
  \|u\|_{\mathcal{A}^1}=\text{inf}\left\{ \sum^{+\infty}_{k=1}\|r^1_k\otimes
    r^2_k\ldots\otimes r^d_k\|_V \mid u=\sum^{+\infty}_{k=1}r^1_k\otimes
    r^2_k\otimes\ldots\otimes r^d_k \right\}.
\end{equation}

\begin{theorem}
  Let us assume that the assumption (H1) is verified and that the functional $\E$
  has the form $\eqref{eq:functional_E_form}$. Then, for a function $u\in \mathcal{A}^1$, there exists a constant $C>0$ such that
  \begin{equation}
    \label{eq:spped_cv_greedy}
    \| u_n-u \|_V\leq Cn^{-1/6},
  \end{equation}
  for all $n\in\mathbb{N}^*$.
\end{theorem}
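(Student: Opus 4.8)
The plan is to recognize the iteration \eqref{def_greedy_algo}, specialized to $\E(v)=\|u-v\|_V^2$, as the \emph{pure greedy algorithm} of DeVore and Temlyakov~\cite{devore-temlyakov-96} applied to the target $u$ with respect to the dictionary
\begin{equation*}
\mathcal{D}=\bigl\{\, z=r^1\otimes\cdots\otimes r^d \ :\ r^i\in V_{x_i},\ \|z\|_V=1 \,\bigr\}
\end{equation*}
of normalized rank-one tensors, and then to run the classical rate argument for this algorithm. First I would note that the set of rank-one tensors is a cone (stable under multiplication by real scalars), with $\mathcal{D}$ as normalized cross-section. Hence, writing $e_n=u-u_n$ for the residual with $u_0=0$, the minimization over the factors $r^i$ in \eqref{def_greedy_algo} amounts to $\min_{\lambda\in\R,\,z\in\mathcal{D}}\|e_{n-1}-\lambda z\|_V^2$, whose optimal coefficient is $\lambda=\langle e_{n-1},z\rangle_V$. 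The iteration therefore selects an atom $z_n\in\mathcal{D}$ maximizing $|\langle e_{n-1},z\rangle_V|$ and produces the orthogonal-gain recursion
\begin{equation}\label{eq:gain}
\|e_n\|_V^2=\|e_{n-1}\|_V^2-\langle e_{n-1},z_n\rangle_V^2 .
\end{equation}
(That the supremum over $\mathcal{D}$ is attained, so the $\argmin$ in \eqref{def_greedy_algo} exists, is the standing well-posedness point established in~\cite{le-bris-lelievre-maday-09} from the weak closedness of the cone of rank-one tensors.)

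Next I would quantify the greedy gain using $u\in\mathcal{A}^1$. Fix $\varepsilon>0$ and expand $u=\sum_{k\ge1}c_k z_k$ with $z_k\in\mathcal{D}$ and $\sum_k|c_k|\le L:=\|u\|_{\mathcal{A}^1}+\varepsilon$, which is possible by \eqref{eq:norm_set_cv}. For any $h\in V$, linearity and the triangle inequality give $|\langle h,u\rangle_V|\le\sum_k|c_k|\,|\langle h,z_k\rangle_V|\le L\,\sup_{z\in\mathcal{D}}|\langle h,z\rangle_V|$, so the greedy atom satisfies $\langle e_{n-1},z_n\rangle_V=\sup_{z\in\mathcal{D}}|\langle e_{n-1},z\rangle_V|\ge |\langle e_{n-1},u\rangle_V|/L$. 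Combined with \eqref{eq:gain} this yields the basic decay inequality
\begin{equation}\label{eq:decay}
\|e_{n-1}\|_V^2-\|e_n\|_V^2\ \ge\ \frac{\langle e_{n-1},u\rangle_V^2}{L^2}.
\end{equation}

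The crux, and the step I expect to be the main obstacle, is to bound $\langle e_{n-1},u\rangle_V$ from below in terms of $\|e_{n-1}\|_V^2$. Writing $\langle e_{n-1},u\rangle_V=\|e_{n-1}\|_V^2+\langle e_{n-1},u_{n-1}\rangle_V$, one sees that the cross term $\langle e_{n-1},u_{n-1}\rangle_V$ does \emph{not} vanish: by construction $e_{n-1}$ is orthogonal only to the last selected atom $z_{n-1}$, not to the whole span of $\{z_1,\dots,z_{n-1}\}$, since the selected atoms are in general non-orthogonal. This is exactly the feature that degrades the rate from the $n^{-1/2}$ of orthogonal greedy schemes to $n^{-1/6}$. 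To control it I would introduce $s_n:=\langle e_{n-1},u\rangle_V$ and $t_n:=\langle e_{n-1},z_n\rangle_V$ and establish two facts: (i) the gains are square-summable, $\sum_n t_n^2=\|u\|_V^2-\lim_n\|e_n\|_V^2<\infty$ by telescoping \eqref{eq:gain}; and (ii) $s_n$ varies slowly, namely $s_{n+1}-s_n=-t_n\langle z_n,u\rangle_V$ with $|\langle z_n,u\rangle_V|\le L$ by the same $\ell^1$ estimate, so $|s_{n+1}-s_n|\le L\,|t_n|$.

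Feeding \eqref{eq:decay} together with (i)--(ii) into the elementary but delicate sequence lemma of DeVore--Temlyakov then gives $a_n:=\|e_n\|_V^2\le C' n^{-1/3}$, and taking square roots yields \eqref{eq:spped_cv_greedy}. The hard part is precisely this sequence lemma: it is the bookkeeping of the non-orthogonal cross terms through the competition between the summability (i) and the slow variation (ii) that produces the exponent $1/3$ for $a_n$ (hence $1/6$ for $\|e_n\|_V$), in contrast with the exponent $1$ (hence $1/2$) one would obtain if the selected atoms were orthogonal, as in the orthogonal greedy algorithm.
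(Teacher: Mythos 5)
Your proposal is correct and follows essentially the same route as the paper, which states this theorem without proof and refers to~\cite{le-bris-lelievre-maday-09,devore-temlyakov-96}: the reduction of the iteration~\eqref{def_greedy_algo} to the pure greedy algorithm over the dictionary of normalized rank-one tensors, the orthogonal-gain recursion, the duality bound via the $\mathcal{A}^1$-norm, and the well-posedness of the $\argmin$ via weak closedness of the set of rank-one tensors are all accurately reproduced and are exactly the ingredients used in the cited references. The one step you do not carry out --- the DeVore--Temlyakov sequence lemma converting the square-summability of the gains and the slow variation of $\langle e_{n-1},u\rangle_V$ into the exponent $1/3$ for $\|e_n\|_V^2$ --- is precisely the part the paper also delegates to~\cite{devore-temlyakov-96}, so your outline is as complete as the paper's own treatment.
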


We note that the convergence rate factor of $\frac{1}{6}$ can be improved to
$\frac{11}{62}$ and that the constant $C$ depends on the norm
$\|u\|_{\mathcal{A}^1}$, see~\cite{le-bris-lelievre-maday-09,devore-temlyakov-96}. 

Our work relies on these theoretical results because in the setting of this paper, we will consider a functional $\E$ and a
Hilbert space $V$ such that $\E(v)=\|u-v\|^2_V$, (see equation~\eqref{eq:energy_i} below). 

\section{Implementation of the algorithm in the case of the approximation of a square-integrable function}

In this section, we will discuss the implementation of the algorithm defined
by~\eqref{def_greedy_algo} in the case of the approximation of a given function $f$
by a sum of tensor products. We will then provide numerical examples of this approach. This particular case has the advantage of being
an easy example to understand the implementation of the greedy
algorithm. Moreover, this procedure is useful in a preliminary step to
approximate the initial condition of the Black-Scholes PDE (see
Section~\ref{section:black_scholes_pde_gral}), namely in order to get a separated representation of the payoff function.

\subsection{Greedy algorithm for the approximation of a square-integrable function}
\label{section:implementation_algorithm}

In order to show the implementation that we use for the
algorithm~\eqref{def_greedy_algo}, let us present the simple problem of
approximating a square-integrable function $f$ by a sum of tensor
products. Mathematically, we consider the spaces $V=L^2(\Omega_1 \times
\Omega_2\times\ldots\times\Omega_d)$, $V_{x_i}=L^2(\Omega_i)$ for $i=1,\ldots ,d$, where
$\Omega_i\subset\mathbb{R}$ is a bounded domain for $i$ such that $1\leq i\leq
d$. We recall that we are looking for a separated representation $f=\sum_{k\geq
  1}r^1_k\otimes r^2_k\otimes\ldots\otimes r^d_k$. Let us consider the following minimization problem:

\begin{equation}
  \label{eq:min_pb_separated_rep}
  \text{Find }u\in V\text{ such that }u=\text{arg }\text{min}_{v\in V}\left(\frac{1}{2}\int_{\Omega_1\times\Omega_2\times\ldots\times\Omega_d}v^2-\int_{\Omega_1\times\Omega_2\times\ldots\times\Omega_d}vf\right)
\end{equation}

\noindent whose solution is obviously $u=f$. In this context, the greedy
algorithm~\eqref{def_greedy_algo} can be rewritten as follows:

Iterate for all $n\geq 1$: Find $(r^1_n,r^2_n,\ldots
,r^d_n)\in V_{x_1}\times V_{x_2}\times\ldots\times V_{x_d}$ such that
$(r^1_n,r^2_n,\ldots ,r^d_n)$ belongs to

\begin{equation}
  \label{eq:greedy_algo_svd}
  \begin{split}
  \argmin_{r^1 \in V_{x_1},\ldots ,r^d \in V_{x_d}}\;\;
  &\frac{1}{2}\int_{\Omega_1 \times \Omega_2\times\ldots\times\Omega_d}\left|\sum^{n-1}_{k=1}r^1_k\otimes
  r^2_k\otimes\ldots\otimes r^d_k+r^1\otimes r^2\otimes\ldots\otimes r^d\right| ^2  \\
&-\int_{\Omega_1 \times
  \Omega_2\times\ldots\times\Omega_d}\left(\sum^{n-1}_{k=1}r^1_k\otimes
  r^2_k\otimes\ldots r^d_k+r^1\otimes r^2\otimes\ldots\otimes r^d\right)f.
\end{split}
\end{equation}

As proposed in~\cite{le-bris-lelievre-maday-09}, instead of solving the
problem~\eqref{eq:greedy_algo_svd}, we will determine the solutions of the Euler
equation for~\eqref{eq:greedy_algo_svd}. Notice that,
in general, the solutions of the Euler equation are not necessarily the
solutions of the minimization problem, given the nonlinearity of the tensor
product space $L^2(\Omega_1)\otimes L^2(\Omega_2)\otimes\ldots\otimes L^2(\Omega_d)$.

The Euler equation for~\eqref{eq:greedy_algo_svd} has the following form:

Find $(r^1_n,r^2_n,\ldots ,r^d_n)\in V_{x_1}\times
V_{x_2}\times\ldots\times V_{x_d}$ such that for any functions
$(r^1,r^2,\ldots ,r^d)\in V_{x_1}\times V_{x_2}\times\ldots\times V_{x_d}$

\begin{equation}
\label{el_svd_2D}
\begin{split}
\int_{\Omega_1\times\Omega_2\times\ldots\times\Omega_d}(r^1_n\otimes
r^2_n\otimes\ldots\otimes r^d_n)\left(r^1\otimes r^2_n\otimes\ldots\otimes
  r^d_n+r^1_n\otimes r^2\otimes\ldots\otimes r^d_n+\ldots +r^1_n\otimes
  r^2_n\otimes\ldots\otimes
  r^d\right) \\
=\int_{\Omega_1\times\Omega_2\times\ldots\times\Omega_d}f_{n-1}\left(r^1\otimes r^2_n\otimes\ldots\otimes
  r^d_n+r^1_n\otimes r^2\otimes\ldots\otimes r^d_n+\ldots +r^1_n\otimes
  r^2_n\otimes\ldots\otimes
  r^d\right)&
\end{split}
\end{equation}
\noindent where $f_{n-1}=f-\sum^{n-1}_{k=1}r^1_k\otimes r^2_k\otimes\ldots\otimes r^d_k$.

Equation~\eqref{el_svd_2D} can be written equivalently as 

\begin{align}
  \label{e_l_svd_system}
  \begin{cases}
    \|r^2_n\|^2\|r^3_n\|^2\ldots\|r^d_n\|^2\; r^1_n = \int_{\Omega_2\times\Omega_3\times\ldots\times\Omega_d}
    \left(r^2_n\otimes\ldots\otimes r^d_n\right)\ f_{n-1},& \\
    \|r^1_n\|^2\|r^3_n\|^2\|r^4_n\|^2\ldots\|r^d_n\|^2\; r^2_n =\int_{\Omega_1\times\Omega_3\times\Omega_4\ldots\times\Omega_d} \left(r^1_n\otimes
      r^3_n\otimes r^4_n\otimes\ldots\otimes r^d_n\right)\ f_{n-1},& \\
     \;\;\;\;\vdots&   \\
      \|r^1_n\|^2\|r^2_n\|^2\|r^3_n\|^2\ldots\|r^{d-1}_n\|^2\; r^d_n =\int_{\Omega_1\times\Omega_2\times\Omega_3\ldots\times\Omega_{d-1}} \left(r^1_n\otimes
      r^2_n\otimes r^3_n\otimes\ldots\otimes r^{d-1}_n\right)\ f_{n-1},&
  \end{cases}
\end{align}

\noindent where $\|r^i_n\|^2$ denotes the square $L^2$-norm: $\|r^i_n\|^2=\int_{\Omega_i}|r^i_n|^2$.

The system~\eqref{e_l_svd_system} is a non linear coupled system of equations
on which a fixed point procedure can be applied as proposed
in~\cite{ammar-mokdad-chinesta-keunings-06}. Choose $(r^{1,(0)}_n,r^{2,(0)}_n,\ldots ,r^{d,(0)})\in
L^2(\Omega_1)\times L^2(\Omega_2)\times\ldots\times L^2(\Omega_d)$, and at iteration $k\geq 0$, compute $(r^{1,(k)}_n,r^{2,(k)}_n,\ldots ,r^{d,(k)})\in
L^2(\Omega_1)\times L^2(\Omega_2)\times\ldots\times L^2(\Omega_d)$ which is
the solution to

\footnotesize
\begin{align}
  \label{e_l_svd_system_fixed_point}
  \begin{cases}
    \|r^{2,(k)}_n\|^2\|r^{3,(k)}_n\|^2\ldots\|r^{d,(k)}_n\|^2\; r^{1,(k+1)}_n=\int_{\Omega_2\times\Omega_3\times\ldots\times\Omega_d}
    \left(r^{2,(k)}_n\otimes\ldots\otimes r^{d,(k)}_n\right)\ f_{n-1},& \\
    \|r^{1,(k+1)}_n\|^2\|r^{3,(k)}_n\|^2\|r^{4,(k)}_n\|^2\ldots\|r^{d,(k)}_n\|^2\;
    r^{2,(k+1)}_n=\int_{\Omega_1\times\Omega_3\times\Omega_4\ldots\times\Omega_d}\left(r^{1,(k+1)}_n\otimes
      r^{3,(k)}_n\otimes r^{4,(k)}_n\otimes\ldots\otimes r^{d,(k)}_n\right)\ f_{n-1},& \\
     \;\;\;\;\vdots&   \\
      \|r^{1,(k+1)}_n\|^2\|r^{2,(k+1)}_n|^2\|r^{3,(k+1)}_n\|^2\ldots\|r^{d-1,(k+1)}_n\|^2\;
    r^{d,(k+1)}_n & \\ 
    =\int_{\Omega_1\times\Omega_2\times\Omega_3\ldots\times\Omega_{d-1}} \left(r^{1,(k+1)}_n\otimes
      r^{2,(k+1)}_n\otimes r^{3,(k+1)}_n\otimes\ldots\otimes
      r^{d-1,(k+1)}_n\right)\ f_{n-1},&
  \end{cases}
\end{align}
\normalsize

\noindent until convergence is reached.

An important point to note is that we start with a linear
problem~\eqref{eq:min_pb_separated_rep} with exponential complexity with
respect to the dimension, and at the end, we obtain a nonlinear
problem~\eqref{e_l_svd_system} with at each iteration linear complexity with
respect to the dimension. This is a general feature of the greedy algorithm:
the curse of dimensionality is circumvented, but the linearity of the original
problem is lost because the space of tensor products is non-linear. 

In the two-dimensional case, the algorithm given by~\eqref{eq:greedy_algo_svd}
is related to the Singular Value Decomposition (or rank one decomposition), as it is explained in~\cite{le-bris-lelievre-maday-09}. In this case, the solutions of the variational problem~\eqref{eq:greedy_algo_svd} are exactly the solutions to the Euler equation~\eqref{el_svd_2D} that verify the second-order optimality conditions. This property does not hold in a $d$-dimensional framework with $d\geq 3$. 

\subsection{Example of a separated representation of a put payoff}
\label{section_initial_condition_svd}

In this section we will apply the algorithm~\eqref{eq:greedy_algo_svd} to
obtain an approximation of the payoff of a basket put option. For the
practical implementation of the greedy algorithm, we need to introduce the
space discretization. In practice, the spaces $V^{\Delta x}_{x_i}$ for
$i=1,\ldots ,d$ that are used to discretize $V_{x_i}=L^2(\Omega)$ with $\Omega_i=(0,1)$ for $i=1,\ldots d$
are the P$1$ finite elements on a uniform mesh with space step
$\Delta x$. The integer $N=\frac{1}{\Delta x}$ is the number of intervals in each direction. For each $k$ and $i=1,\ldots ,d$, we discretize the functions
$r^i_k$ that appear in the approximation of the solution
given by the expression~\eqref{el_svd_2D} as follows:

\begin{equation}
\label{notation_r_s}
  r^{i,\Delta x}_k(x_i)=\sum^N_{j=0}r^{i,j}_{k}\phi_j(x_i),\;\;\;r^{i,j}_{k}\in\mathbb{R},\;\forall j,k,
\end{equation}

\noindent where $\phi_i(x)=\phi\left(\frac{x-x_i}{\Delta x}\right)$ with $\phi(x)=\left\{\begin{array}{ll}1-|x| & \text{if }|x|\leq 1, \\ 0 & \text{if }|x|>0. \end{array}\right. $  
\\
\\

\noindent This type of discretization and its generalization to the
$d$-dimensional case will be used for all the numerical simulations of this paper.

Let us now consider the problem~\eqref{eq:greedy_algo_svd} with
$f(x_1,\ldots ,x_d)=\left(K-\frac{1}{d}\sum_{i=1}^d x_i\right)_+$. Figure~\ref{5D_put_it_1} shows how the algorithm approximates the basket put payoff in a two-dimensional framework ($d=2$). We observe that, as the number of iterations of the greedy algorithm increases, the approximation of the function $f(x_1,x_2)$ improves.

\begin{figure}[htbp]
 \centering
 \includegraphics[width=0.5\linewidth]{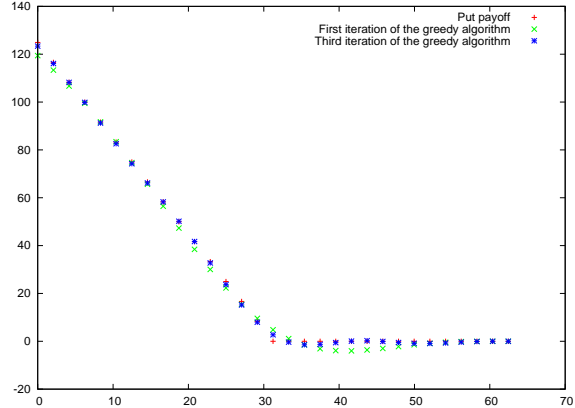} 
 \caption{Basket put option with two assets. We consider here the intersection between the surface of prices and the plane
 $S_1=S_2$. To obtain this approximation we
   take 31 points of discretization per dimension (N= 30). In this figure, we show
   the approximation given after the first and third iteration of the algorithm.}
\label{5D_put_it_1}
\end{figure}

Figure~\ref{cv_curves_put_payoff} shows the convergence curves that we obtain
for this problem as the dimension $d$ increases. We observe that, as the dimension increases, the number of iterations needed to obtain the convergence increases as well.

\begin{figure}[htbp]
 \centering
 \includegraphics[width=0.5\linewidth]{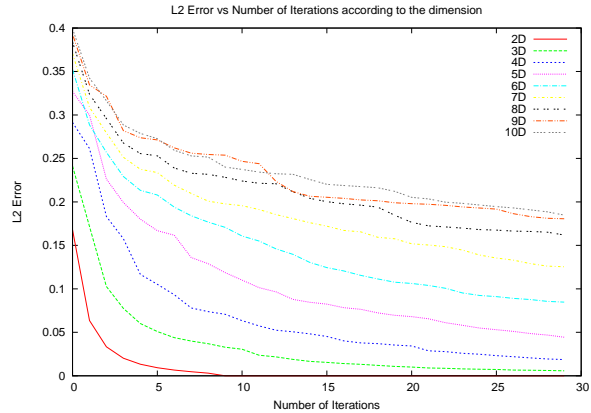} 
 \caption{Convergence curves for the approximation of a basket put payoff by a
   sum of tensor products. We observe that if the dimension increases, then
   the number of iterations needed to obtain the convergence increases as
   well. The error calculated is given by the equation
   ~\eqref{eq:relative_error} below.}
\label{cv_curves_put_payoff}
\end{figure}

We also provide in Table~\ref{table_nb_it_cv_payoff} the number of
iterations needed in order to obtain a relative error of $10^{-5}$ when we
consider 11 points of discretization per dimension ($N=10$). The relative error
calculated is the discrete $L^2$ error

\begin{equation}
\label{eq:relative_error}  
  e_n=\frac{\sqrt{\frac{1}{N}\sum^N_{i_1=1}\sum^N_{i_2=1}\ldots\sum^N_{i_d=1}\left(f(x_{i_1},x_{i_2},\ldots,x_{i_d})-u^n(x_{i_1},x_{i_2},\ldots,x_{i_d})\right)^2}}{\sqrt{\frac{1}{N}\sum^N_{i_1=1}\sum^N_{i_2=1}\ldots\sum^N_{i_d=1}f(x_1,\ldots
      ,x_d)^2}}
\end{equation}

\noindent where $u^n(x_1,x_2,\ldots ,x_d)=\sum^n_{k=1}r^1_k\otimes
r^2_k\otimes\ldots\otimes r^d_k$ is the solution obtained with the greedy
algorithm at the iteration $n$. It is to be outlined that computing the norm is more costly than the greedy algorithm itself.

\begin{table}
\begin{center}
\begin{tabular}{|c|c|}
  \hline
  Dimension & Number of iterations \\
  \hline
  1 & 1 \\
  2 & 2 \\
  3 & 10 \\
  4 & 22 \\
  5 & 101 \\
  6 & 228 \\
  7 & 1077 \\
  8 & 3974 \\
  \hline
\end{tabular}
\caption{Number of iterations needed to obtain a relative error of $10^{-5}$
  when we take 11 discretization points per dimension\label{table_nb_it_cv_payoff}}
\end{center}
\end{table}

Notice that the full tensor product approximation would require $11^8
\simeq 2.10^8$ degrees of freedom in an $8$-dimensional case to be compared
with the $3974\times 8\times 11\simeq 350000$ degrees of freedom that we
obtained. For the evaluation of this number, we used the fact that at each iteration of the algorithm we get $8$ functions that are determined by $11$ discretization points.

In order to reduce the computational time of this calculation, we use the
specific form of the payoff function to deduce, in a preliminary step, the
points that belong to the support of this function. Therefore, when we
calculate numerically the integral term

\begin{equation}
\label{integral_term_euler}  
\int_{\Omega_2\times\Omega_3\times\ldots\times\Omega_d} \left(r^2_n\otimes\ldots\otimes
  r^d_n\right)\ f\;dx_2dx_3\ldots dx_d
\end{equation}

\noindent in~\eqref{e_l_svd_system}, we do
not need to pass through the points where the function $f$ vanishes. In practice,
we have used a backtracking algorithm to describe the support of the payoff function. This type of
algorithm consists in constructing candidates sequentially and neglecting them
when they do not verify the conditions required as a solution, in this case to
belong to the support of the payoff function. For instance, in a $5$-dimensional case,
the computational time is reduced by a factor of $\frac{4}{5}$ by taking into
account the support of the payoff function in the computations of the integral term.

Let us now make a few remarks.

\begin{remark}

In our numerical experiments, the initial conditions needed to begin the
iterations in the fixed point procedure are taken randomly. We indeed observed
from numerical experiments that it yields better results in terms of
convergence than in the case where constant initial conditions are used.

\end{remark}

\begin{remark}

Concerning the computational time, we note that if the dimension increases, one iteration of the algorithm takes
more time to be computed because the number of equations in the Euler
system~\eqref{e_l_svd_system} increases linearly with respect to the dimension. The integral terms of type~\eqref{integral_term_euler} also demand more time of execution because the domain has a new variable.

\end{remark}

\subsection{Pricing of a basket put using the separated approximation of the payoff}
\label{section:pricing_basket_with_payoff}

As an example to show that the approximation by a sum of tensor
products makes sense, we can use this approximation as a method to obtain
prices of options in the Black-Scholes framework. The price of a European
option in the Black-Scholes model (see Section~\ref{section:black_scholes_pde}) is given by

\begin{equation}
  \label{eq:pricing_svd}
  \begin{split}
    P_t & = \mathbb{E}\left[ e^{-r(T-t)} f(S_1(T),\ldots ,S_d(T))|\mathcal{F}_t
    \right] \\
    & = \int_{\Omega_1\times\ldots\times\Omega_d}e^{-r(T-t)}f(y_1,\ldots
    ,y_d)g(T,y_1,\ldots ,y_d|t,S_1(t),\ldots ,S_d(t))dy_1\ldots dy_d
  \end{split}
\end{equation}

\noindent where $(\mathcal{F}_t)_{t\geq 0}$ is the natural filtration generated by the $d$ assets prices $S_i(t)\; (i=1,\ldots ,d)$, $f$ is the payoff of the option, $g(T,. | t,S_1(t),\ldots ,S_d(t))$ is the joint
density of the variables $S_1(T),\ldots ,S_d(T)$ given the values $(S_1(t),\ldots ,S_d(t))$ of the underlying assets at time $t$. This joint density is a log-normal law and thus has an explicit analytical expression. 

Using the greedy algorithm as we saw in the previous section, we can obtain a
separable approximation of the product

$$f(y_1,\ldots ,y_d)g(T,y_1,\ldots ,y_d|t,S_1(t),\ldots ,S_d(t)),$$

\noindent and thus the integral~\eqref{eq:pricing_svd} can be calculated very
efficiently using the Fubini's rule. 

\noindent In Figure~\ref{7D_price_put_integral}, we apply this idea for the case of a
basket put option on seven assets.

\begin{figure}[htbp]
 \centering
 \includegraphics[width=0.5\linewidth]{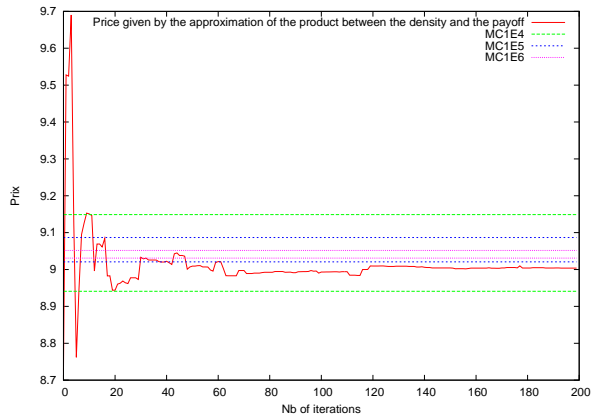} 
 \caption{Price of put basket option with $7$ assets. The continuous curve gives
   the price of this financial product with respect to the number of
   iterations of the algorithm. The horizontal lines represent the confidence
   interval obtained with a Monte Carlo method using respectively $10^4$, $10^5$ and $10^6$ iterations.}
\label{7D_price_put_integral}
\end{figure}

\section{Greedy algorithm for solving the Black-Scholes partial differential equation}
\label{section:black_scholes_pde_gral}

Now, we will apply the greedy algorithm~\eqref{def_greedy_algo} presented in
Section~\ref{section:presentation_greedy_algo} to solve the Black-Scholes
equation, and obtain the price of a European option.

\subsection{Weak formulation of the Black-Scholes partial differential equation}   
\label{section:black_scholes_pde}

The Black-Scholes model in a $d$-dimensional framework describes the dynamics
of $d$ risky assets that satisfy the following stochastic differential
equations:

\begin{equation}
\label{dyn_bs}
  \frac{dS_i(t)}{S_i(t)}=rdt+\sigma_idB_i(t)\text{ for all }i=1,\ldots ,d,
\end{equation}

with

\begin{equation}
\label{correl_browniens}
d\langle B_i,B_j\rangle_t=\rho_{ij}dt.
\end{equation}

The number $\rho_{ij}$ is the correlation between the Brownian motions $B_i$ and
$B_j$ that drive the dynamics of the assets $S_i$ and $S_j$
respectively.

The coefficient $\sigma_i$ represents the volatility of the asset $S_i$ at
time t, and $r$ is the risk-free instantaneous interest rate. To simplify, we
assume that $r$ and $\sigma_i$ for $i=1,\ldots d$ are constant during the
period $[0,T]$. We note that the greedy algorithm that we are proposing can be used
when the risk-free interest rate is a continuous function of time and the
volatility is a continuous function of time and of the asset under standard regularity assumptions (See Chapter 2 in~\cite{achdou-pironneau-05}).

We recall that the price of a European option with payoff $f$ and maturity $T$
is given by the following formula

$$\mathbb{E}\left[e^{r(T-t)}f(S_1(T),\ldots ,S_d(T))|\mathcal{F}_t\right].$$ 

Using the Markovianity of the process
$(S_1(t),\ldots ,S_d(t))$, it can be written as: 

\begin{equation}
  \label{eq:feynman_kac}
  \mathbb{E}\left[e^{r(T-t)}f(S_1(T),\ldots ,S_d(T))|\mathcal{F}_t\right]=P(t,S_1(t),\ldots S_d(t)),
\end{equation}

\noindent where $P$ is a deterministic function.

The function $P(t,S_1,\ldots ,S_d)$ satisfies the Black-Scholes PDE which can be obtained using the Feynman-Kac theorem. The Black-Scholes equation in a $d$-dimensional framework is a parabolic PDE that has the following form:

\begin{eqnarray}
\label{eq:bs_edp_S}
\begin{cases}
  \frac{\partial P}{\partial t}+\mathcal{L}P=0,\;\;\; t<T, (S_1,\ldots ,S_d)\in\mathbb{R}^d_+, \\
  P(T,S_1,\ldots ,S_d)=f(S_1,\ldots ,S_d),\;\;\;(S_1,\ldots ,S_d)\in\mathbb{R}^d_+, 
\end{cases}
\end{eqnarray}
where the operator $\mathcal{L}$ is given by 
\begin{equation*}
  \mathcal{L}P=\frac{1}{2}\sum^d_{i,j=1}\frac{\partial^2  P}{\partial S_i\partial S_j}\rho_{ij}\sigma_i\sigma_jS_iS_j+\sum^d_{i=1}rS_i\frac{\partial P}{\partial S_i}-rP.
\end{equation*}

Let us recall the standard framework for problem~\eqref{eq:bs_edp_S}. Setting $\tau:=T-t$, the time to maturity, we get the following forward parabolic problem for $\hat{P}(\tau,S_1\ldots ,S_d)=P(t,S_1\ldots ,S_d)$

\begin{eqnarray}
  \label{eq:std_bs_fwd}
  \begin{cases}
    \frac{\partial \hat{P}}{\partial \tau}-\mathcal{L}\hat{P}=0, \;\;\; 0<\tau\leq T, (S_1,\ldots ,S_d)\in\mathbb{R}^d_+ \\
\hat{P}(0,S_1,\ldots ,S_d)=f(S_1,\ldots ,S_d), \;\;\; (S_1,\ldots ,S_d)\in\mathbb{R}^d_+.
  \end{cases}
\end{eqnarray}

We note that it is possible to write the diffusion term in the operator $\mathcal{L}+r$ in a divergence form as follows:

\begin{equation*}
  \mathcal{L}\hat{P}+r\hat{P}=\frac{1}{2}\sum^d_{i=1}\frac{\partial }{\partial S_i}\left(\sum^d_{j=1}\rho_{i,j}\sigma_i\sigma_jS_iS_j\frac{\partial \hat{P}}{\partial S_j}\right)+\sum^d_{j=1}\left(rS_j-\frac{1}{2}\sum^d_{i=1}\frac{\partial}{\partial S_i}(\rho_{i,j}\sigma_i\sigma_jS_iS_j)\right)\frac{\partial \hat{P}}{\partial S_j}.
\end{equation*}

Therefore, if we multiply $-\mathcal{L}\hat{P}$ by a test function $Q$ and then we integrate on $\mathbb{R}^d_+$, we obtain the following bilinear form:

\begin{equation}
\label{bilinear_form_bs_before_chg_variables}
\begin{split}
  b_t(\hat{P},Q) & =
  \frac{1}{2}\sum^d_{i=1}\sum^d_{j=1}\int_{\mathbb{R}^d_+}\rho_{i,j}\sigma_i\sigma_jS_iS_j\frac{\partial
    \hat{P}}{\partial S_j}\frac{\partial Q}{\partial S_i} \\
   &\;\;\;\; -
   \sum^d_{j=1}\int_{\mathbb{R}^d_+}\left(rS_j-\frac{1}{2}\sum^d_{i=1}\frac{\partial}{\partial
       S_i}(\rho_{i,j}\sigma_i\sigma_jS_iS_j)\right)\frac{\partial \hat{P}}{\partial
     S_j}Q+r\int_{\mathbb{R}^d_+}\hat{P}Q.
 \end{split}
\end{equation}

Let us now introduce the Hilbert space 

\begin{equation*}
  \mathcal{V}(\mathbb{R}^d_+)=\left\{ v:v\in L^2(\mathbb{R}^d_+),S_i\frac{\partial v}{\partial S_i}\in L^2(\mathbb{R}^d_+), i=1,\ldots ,d \right\}
\end{equation*}

and its norm

\begin{equation*}
  \|v\|_{\mathcal{V}}=\left(\|v\|^2_{L^2(\mathbb{R}^d_+)}+\sum^d_{i=1}\left\|S_i\frac{\partial v}{\partial S_i}\right\|^2_{L^2(\mathbb{R}^d_+)}\right)^\frac{1}{2}.
\end{equation*}

We have the following result for problem~\eqref{eq:std_bs_fwd} (see
Theorem 2.11 in~\cite{achdou-pironneau-05}).

\begin{theorem}
\label{uniq_exis_fv_bs}
Let us assume that the matrix $\Xi$ defined by
$\Xi_{i,j}=\rho_{i,j}\sigma_i\sigma_j$ is positive-definite. Then for all $f\in L^2(\mathbb{R}^d_+)$, there exists a unique function $\hat{P}\in L^2(0,T;\mathcal{V})\cap\mathcal{C}^0([0,T];L^2(\mathbb{R}^d_+))$, with $\frac{\partial \hat{P}}{\partial t}\in L^2(0,T;\mathcal{V}')$ such that, for any function $\phi\in \mathcal{D}(0,T)$, for all $v\in\mathcal{V}$,

\begin{equation}
\label{fv_bs}
-\int^T_0\phi'(t)\left(\int_{\mathbb{R}^d_+}\hat{P}(t)v\right)dt+\int^T_0\phi(t)b_t(\hat{P}(t),v)dt=0  
\end{equation}

and

\begin{equation}
\label{fv_bs_init_condit}
\hat{P}(t=0)=f.  
\end{equation}

\end{theorem}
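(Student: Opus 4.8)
The plan is to recognize the statement as the standard existence and uniqueness result for an abstract parabolic variational problem and to apply the Lions theorem (the time-dependent Lax--Milgram theorem, see e.g. Dautray--Lions or Lions--Magenes), of which Theorem 2.11 in~\cite{achdou-pironneau-05} is a specialization. The whole argument reduces to verifying the hypotheses of that abstract theorem for the Gelfand triple $\mathcal{V} \hookrightarrow L^2(\mathbb{R}^d_+) \hookrightarrow \mathcal{V}'$ and for the bilinear form $b_t$ defined in~\eqref{bilinear_form_bs_before_chg_variables}. First I would set the functional-analytic stage: check that $\mathcal{V}$ equipped with $\|\cdot\|_{\mathcal{V}}$ is a Hilbert space and that it embeds continuously and densely into $H := L^2(\mathbb{R}^d_+)$, so that, identifying $H$ with its dual, one obtains the triple above. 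Density follows from the fact that functions smooth and compactly supported in the open orthant belong to $\mathcal{V}$ and are dense in $L^2(\mathbb{R}^d_+)$; this point requires a little care because of the weights $S_i$ appearing in the norm.

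The core of the proof is then to establish two properties of $b_t$: continuity and a coercivity-up-to-shift (G\aa rding) inequality. It is convenient to introduce $w_i = S_i \tfrac{\partial v}{\partial S_i}$, which lies in $L^2(\mathbb{R}^d_+)$ by definition of $\mathcal{V}$. Continuity is immediate from Cauchy--Schwarz: the second-order part is controlled by $\sum_{i,j}\|w_i\|\,\|w_j\|$; the first-order terms, after noting that $\partial_{S_i}(\rho_{i,j}\sigma_i\sigma_j S_i S_j)$ is again a constant times $S_j$, so that the whole drift coefficient is proportional to $S_j$, are controlled by $\|w_j\|\,\|v\|_{L^2}$; and the zero-order term by $\|v\|_{L^2}^2$. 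Altogether $|b_t(v,Q)| \leq C\,\|v\|_{\mathcal{V}}\,\|Q\|_{\mathcal{V}}$.

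The decisive step is coercivity. Taking $Q=v$, the principal part becomes $\tfrac{1}{2}\int_{\mathbb{R}^d_+} \tr{w}\,\Xi\, w$ with $w=(w_1,\dots,w_d)$, and the positive-definiteness of $\Xi$ provides the lower bound $\tfrac{1}{2}\lambda_{\min}(\Xi)\sum_i \|w_i\|_{L^2}^2$. The first-order contributions, each of the form $\int (\mathrm{const})\,w_j\,v$, are absorbed by Young's inequality, transferring an arbitrarily small multiple of $\sum_j\|w_j\|^2$ into the principal part and the remainder into a multiple of $\|v\|_{L^2}^2$. Adding a sufficiently large $\lambda\|v\|_{L^2}^2$ then yields $b_t(v,v)+\lambda\|v\|_{L^2}^2 \geq \alpha\|v\|_{\mathcal{V}}^2$ for some $\alpha>0$, uniformly in $t$. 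Since the coefficients here are constant in time, measurability of $t\mapsto b_t$ is trivial (indeed $b_t$ does not depend on $t$), so every hypothesis of Lions' theorem is met.

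Finally I would invoke the abstract theorem to obtain a unique $\hat{P}\in L^2(0,T;\mathcal{V})$ with $\tfrac{\partial \hat{P}}{\partial t}\in L^2(0,T;\mathcal{V}')$ solving~\eqref{fv_bs} with initial datum~\eqref{fv_bs_init_condit}; the continuity-in-time statement $\hat{P}\in\mathcal{C}^0([0,T];L^2(\mathbb{R}^d_+))$ then follows from the standard embedding of $\{u\in L^2(0,T;\mathcal{V}):\tfrac{\partial u}{\partial t}\in L^2(0,T;\mathcal{V}')\}$ into $\mathcal{C}^0([0,T];H)$ (Lions--Magenes). The main obstacle is the G\aa rding inequality: because the diffusion matrix with entries $\rho_{i,j}\sigma_i\sigma_j S_iS_j$ degenerates at the boundary of the orthant and grows at infinity, coercivity cannot hold in the plain $H^1$ sense but only in the weighted space $\mathcal{V}$, and it is precisely the positive-definiteness of $\Xi$ that turns the principal part into full control of $\sum_i\|S_i\tfrac{\partial v}{\partial S_i}\|_{L^2}^2$. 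Controlling the first-order drift by Young's inequality without destroying this coercivity is the delicate bookkeeping step.
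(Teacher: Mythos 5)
Your proposal is correct and takes essentially the same route as the paper, which does not prove this theorem itself but simply invokes Theorem 2.11 of \cite{achdou-pironneau-05}. Your verification of continuity and of the G\aa rding inequality for $b_t$ on the weighted space $\mathcal{V}$ (positive-definiteness of $\Xi$ controlling $\sum_i\|S_i\partial_{S_i}v\|_{L^2}^2$, first-order terms absorbed by Young's inequality), followed by Lions' theorem for the Gelfand triple and the standard embedding into $\mathcal{C}^0([0,T];L^2(\mathbb{R}^d_+))$, is precisely the argument behind that cited result.
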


This result shows the existence and uniqueness of a weak solution for the problem~\eqref{eq:std_bs_fwd}.

In this work, our goal is to obtain the curve of prices for a put basket option which has a square-integrable payoff. The price of call basket options can be obtained by the well-known put-call parity. So, as initial condition we consider the payoff function:

\begin{equation}
  \label{eq:init_cond}
  f(S_1,\ldots ,S_d)=\left(K-\frac{1}{d}\sum^d_{i=1}S_i(0)\right)_+
\end{equation}

\noindent where the constant $K$ is the strike of the option.

Three new difficulties appear when we want to apply the greedy
algorithm~\eqref{def_greedy_algo} to solve the problem ~\eqref{eq:std_bs_fwd}
when we compare it with the application of the greedy algorithm to the case of the Poisson problem studied in~\cite{le-bris-lelievre-maday-09}:

\begin{enumerate}
\item It is a problem posed on an infinite domain.
\item It is a time-dependent problem.
\item We cannot simply recast the weak formulation~\eqref{fv_bs} of the problem~\eqref{eq:std_bs_fwd} as a minimization problem because the bilinear form~\eqref{bilinear_form_bs_before_chg_variables} is non-symmetric.
\end{enumerate}  

\subsection{Formulation on a bounded domain}
\label{subsection:bounded_domain}
% Concerning the first difficulty, we note that the
% form~\eqref{bilinear_form_bs_before_chg_variables} does not derive from a
% gradient, even in the case $r=0$ because of the advection term. Thus, we
% cannot directly set the weak formulation ~\eqref{fv_bs} of the problem~\eqref{eq:std_bs_fwd} as a minimization problem.

The financial assets $S_i$ for $i=1,\ldots ,d$ take values in $[0,\infty)$. Consequently, we have
to deal with an infinite domain. Let us then introduce the following transformations:

\begin{equation}
  \label{eq:chang_var_direct}
  \Psi:\mathbb{R}_+\mapsto [0,1],\;\;\; s\mapsto\frac{s}{s+\frac{K}{d}},
\end{equation}

\begin{equation}
  \label{eq:chang_var_inverse}
\Psi^{-1}:[0,1]\mapsto\mathbb{R}_+,\;\;\; x\mapsto \frac{xK}{d(1-x)}.  
\end{equation}

As remarked by Pommier in~\cite{pommier-08}, the change of
variables~\eqref{eq:chang_var_direct} maps bijectively $\mathbb{R}_+$
to the interval $[0,1]$ and appears to be efficient in practice since it leads to a
refined mesh around the singularity line of the payoff function. In~\cite{pommier-08}, Pommier explains that if we
set a classical localized boundary-domain then the volume next to this
singularity decays exponentially with the dimension. This change of variables
allows us not to impose artificial boundary conditions contrary to classical truncation techniques. Proposition~\ref{chg_variables} below shows that with the change of variables~\eqref{eq:chang_var_direct}, we get a well-posed problem in a bounded domain without boundary conditions.

Applying the change of variable~\eqref{eq:chang_var_direct} into the equation $\eqref{eq:bs_edp_S}$, we obtain:

\begin{eqnarray}
  \label{eq:op_bs_edp_x}
  \begin{cases}
    -\frac{\partial u}{\partial t}+\tilde{\mathcal{L}}u=0,& \\
    u(0,x_1,\ldots ,x_d)=(K-\frac{K}{d}\sum^d_{i=1}\frac{x_i}{1-x_i}),&
  \end{cases}
\end{eqnarray}

\noindent where $u(t,x_1,\ldots ,x_d)=P(t,S_1,\ldots ,S_d)$ with
$S_i=\Psi(x_i)$,$(x_1,\ldots ,x_d)\in\Omega=(0,1)^d$ and

\begin{equation*}
\tilde{\mathcal{L}}u=\div(A\nabla u)+\sum^d_{i=1}\left[r+\sigma^2_ix_i-\sigma^2_i+\frac{\sigma_i}{2}\sum^d_{j=1,j\neq
  i}\rho_{i,j}\sigma_j(2x_j-1)\right]x_i(1-x_i)\frac{\partial u}{\partial x_i}-ru,  
\end{equation*}

\noindent with the matrix $A$ given by

\begin{equation}
  \label{A_def}
  A_{i,j}(x_1,\ldots ,x_d):=\frac{\rho_{i,j}\sigma_i\sigma_j}{2}x_ix_j(1-x_i)(1-x_j).
\end{equation}

Then let us introduce the following Hilbert space

\begin{equation}
  \tilde{\mathcal{V}}(\Omega)=\left\{ v\in L^2(\Omega)\; |\;\forall\; 1\leq i\leq d,\; (1-x_i)x_i\frac{\partial v}{\partial x_i}\in L^2(\Omega) \right\},
\end{equation}

\noindent endowed with the norm

\begin{equation}
  \| v\|_{\tilde{\mathcal{V}}}=\left( \|v\|^2_{L^2(\Omega)}+|v|^2_{\tilde{\mathcal{V}}}\right)^{\frac{1}{2}}
\end{equation}

\noindent where

\begin{equation}
  \label{eq:semi_norm_V}
  |v|^2_{\tilde{\mathcal{V}}}=\sum^d_{i=1}\left\|x_i(1-x_i)\frac{\partial v}{\partial x_i}\right\|^2_{L^2(\Omega)}. 
\end{equation}

In what follows, we need the following lemma:

\begin{lemma}
\label{lemma:density}
 The space $\mathcal{C}^{\infty}_c(\Omega)$ is dense in $\tilde{\mathcal{V}}(\Omega)$.

\end{lemma}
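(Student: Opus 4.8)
The plan is to combine a boundary cutoff with a mollification, in that order. Throughout write $\Omega=(0,1)^d$ and recall that the only constraint beyond square-integrability is that the degenerate derivatives $x_i(1-x_i)\partial_{x_i}v$ lie in $L^2(\Omega)$. I would first reduce to functions that vanish in a neighborhood of $\partial\Omega$, and then regularize those by convolution, keeping the support inside $\Omega$.

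Step 1 (cutoff). Fix $v\in\tilde{\mathcal{V}}(\Omega)$. The crucial point is that the weight $x_i(1-x_i)$ vanishes linearly at each endpoint, which leaves room for a Hardy-type logarithmic cutoff. In one variable, for small $\epsilon>0$ I would take a Lipschitz (or smoothed) $\zeta_\epsilon$ with $\zeta_\epsilon=0$ on $[0,\epsilon^2]\cup[1-\epsilon^2,1]$, $\zeta_\epsilon=1$ on $[\epsilon,1-\epsilon]$, and a logarithmic profile on the two transition bands, so that
$$\sup_{x\in(0,1)} x(1-x)\,|\zeta_\epsilon'(x)| \le \frac{C}{\log(1/\epsilon)}.$$
Set $\eta_\epsilon(x)=\prod_{i=1}^d\zeta_\epsilon(x_i)$, so that $\eta_\epsilon v\in\tilde{\mathcal{V}}(\Omega)$ is supported in a compact subset of $\Omega$. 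By the product rule, $x_i(1-x_i)\partial_{x_i}(\eta_\epsilon v)=\eta_\epsilon\,x_i(1-x_i)\partial_{x_i}v + v\,[x_i(1-x_i)\zeta_\epsilon'(x_i)]\prod_{j\neq i}\zeta_\epsilon(x_j)$. The first term converges to $x_i(1-x_i)\partial_{x_i}v$ in $L^2(\Omega)$ by dominated convergence (using $0\le\eta_\epsilon\le1$ and $\eta_\epsilon\to1$ a.e.), and similarly $\eta_\epsilon v\to v$ in $L^2(\Omega)$. The second term is the one to control: its modulus is bounded by $\frac{C}{\log(1/\epsilon)}|v|$, so its $L^2(\Omega)$ norm is at most $\frac{C}{\log(1/\epsilon)}\|v\|_{L^2(\Omega)}\to0$. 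Hence $\eta_\epsilon v\to v$ in $\tilde{\mathcal{V}}(\Omega)$.

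Step 2 (mollification). It then suffices to approximate a function $w\in\tilde{\mathcal{V}}(\Omega)$ whose support is a compact $K\subset\Omega$ by elements of $\mathcal{C}^{\infty}_c(\Omega)$. On $K$ the weights $x_i(1-x_i)$ are bounded above and below away from zero, so on $K$ the $\tilde{\mathcal{V}}$ norm is equivalent to the $H^1$ norm; in particular the extension of $w$ by zero belongs to $H^1(\mathbb{R}^d)$ with support in $K$. Taking a standard mollifier $\rho_\delta$ with $\delta<\mathrm{dist}(K,\partial\Omega)$, the convolutions $w*\rho_\delta$ lie in $\mathcal{C}^{\infty}_c(\Omega)$ and converge to $w$ in $H^1$ on a fixed compact neighborhood of $K$, hence in $\tilde{\mathcal{V}}(\Omega)$. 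Combining the two steps yields the density.

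The main obstacle is Step 1, and precisely the cross term $v\,x_i(1-x_i)\zeta_\epsilon'$: a naive linear cutoff over a band of width $\epsilon$ would give $x_i(1-x_i)\zeta_\epsilon'=O(1)$ on that band and would fail to vanish in the limit. The resolution is the logarithmic profile, which exploits the linear degeneracy of the weight so that $x_i(1-x_i)\zeta_\epsilon'=O(1/\log(1/\epsilon))$; this is exactly where the specific form of the weight in $\tilde{\mathcal{V}}$ enters. Once the cutoff is under control, the mollification step is routine.
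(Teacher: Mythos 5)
Your proof is correct and complete. Note that the paper does not actually carry out an argument here: it simply invokes Lemma~2.6 of Achdou--Pironneau, which treats the analogous weighted space for the Black--Scholes operator. What you have written is essentially the standard proof behind that cited lemma, made self-contained: the logarithmic cutoff on bands $[\epsilon^2,\epsilon]$ is exactly the device that exploits the linear vanishing of the weight $x_i(1-x_i)$ to make the commutator term $v\,x_i(1-x_i)\zeta_\epsilon'$ of size $O(1/\log(1/\epsilon))$ in $L^\infty$, hence negligible in $L^2$; and you correctly identify that a linear cutoff would fail. Two minor points worth making explicit: the product rule in Step~1 is legitimate because $v\in\tilde{\mathcal{V}}(\Omega)$ implies $v\in H^1_{\mathrm{loc}}(\Omega)$ (the weight is bounded below on compacts), so multiplying by a Lipschitz function is harmless; and in Step~2 the passage from $H^1$ convergence to $\tilde{\mathcal{V}}$ convergence is even simpler than you state, since $x_i(1-x_i)\le 1/4$ gives $\|u\|_{\tilde{\mathcal{V}}}\le C\|u\|_{H^1(\Omega)}$ globally. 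The benefit of your route is that the lemma no longer rests on an external reference; the cost is only length.
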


This lemma can be directly deduced from Lemma 2.6 in~\cite{achdou-pironneau-05}.

\begin{corollary}
The following integration by parts formula holds:

\begin{equation}
  \label{eq:integration_by_parts}
  \int_{\Omega}\div(A\nabla {u})v=-\int_{\Omega}(A\nabla u)\nabla v, \;\;\forall u,v\in \tilde{\mathcal{V}}(\Omega).
\end{equation}

\end{corollary}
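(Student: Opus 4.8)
The plan is to prove the formula first for smooth, compactly supported functions, where it is an elementary integration by parts with no boundary contribution, and then to extend it to all of $\tilde{\mathcal{V}}(\Omega)$ by the density result of Lemma~\ref{lemma:density}. For $u,v\in\mathcal{C}^\infty_c(\Omega)$ the quantity $\div(A\nabla u)$ is a genuine smooth function and $v$ vanishes near $\partial\Omega$, so Green's formula gives $\int_\Omega\div(A\nabla u)\,v=-\int_\Omega(A\nabla u)\cdot\nabla v$ directly. The whole difficulty is therefore to make sense of the left-hand side when $u$ is only in $\tilde{\mathcal{V}}(\Omega)$ and to show that both sides are continuous for the $\tilde{\mathcal{V}}$-topology, so that the smooth identity survives in the limit.

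The key algebraic observation is that the matrix $A$ in~\eqref{A_def} carries exactly the weights appearing in the seminorm~\eqref{eq:semi_norm_V}. Writing $(A\nabla u)_i=x_i(1-x_i)\sum_{j=1}^d\frac{\rho_{i,j}\sigma_i\sigma_j}{2}\,x_j(1-x_j)\frac{\partial u}{\partial x_j}$, and since each factor $x_i(1-x_i)$ is bounded on $(0,1)$ by $1/4$ while $x_j(1-x_j)\frac{\partial u}{\partial x_j}\in L^2(\Omega)$ for $u\in\tilde{\mathcal{V}}$, I first get $A\nabla u\in(L^2(\Omega))^d$, so that $\div(A\nabla u)$ is a well-defined distribution on $\Omega$. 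Moreover, forming the product $(A\nabla u)\cdot\nabla v=\sum_i(A\nabla u)_i\frac{\partial v}{\partial x_i}$ and regrouping the weights, every term is of the form $\bigl(x_j(1-x_j)\tfrac{\partial u}{\partial x_j}\bigr)\bigl(x_i(1-x_i)\tfrac{\partial v}{\partial x_i}\bigr)$ up to the bounded constants $\rho_{i,j}\sigma_i\sigma_j/2$. Cauchy--Schwarz then yields the continuity estimate $\left|\int_\Omega(A\nabla u)\cdot\nabla v\right|\le C\,|u|_{\tilde{\mathcal{V}}}\,|v|_{\tilde{\mathcal{V}}}$, with $C$ depending only on the $\rho_{i,j}$ and $\sigma_i$. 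Hence the right-hand side is a bounded bilinear form on $\tilde{\mathcal{V}}(\Omega)\times\tilde{\mathcal{V}}(\Omega)$.

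With $u\in\tilde{\mathcal{V}}(\Omega)$ fixed, the left-hand side $\int_\Omega\div(A\nabla u)\,v$ is to be read as the pairing of the distribution $\div(A\nabla u)$ against $v$; by definition of the distributional divergence this pairing equals $-\int_\Omega(A\nabla u)\cdot\nabla\varphi$ for every $\varphi\in\mathcal{C}^\infty_c(\Omega)$. The estimate of the previous paragraph shows that the linear form $\varphi\mapsto-\int_\Omega(A\nabla u)\cdot\nabla\varphi$ is $\tilde{\mathcal{V}}$-continuous on $\mathcal{C}^\infty_c(\Omega)$. Given any $v\in\tilde{\mathcal{V}}(\Omega)$, Lemma~\ref{lemma:density} provides a sequence $\varphi_n\in\mathcal{C}^\infty_c(\Omega)$ with $\varphi_n\to v$ in $\tilde{\mathcal{V}}$; passing to the limit in the smooth identity $\langle\div(A\nabla u),\varphi_n\rangle=-\int_\Omega(A\nabla u)\cdot\nabla\varphi_n$ and using the continuity of both sides gives the claimed formula for all $v\in\tilde{\mathcal{V}}(\Omega)$.

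The main obstacle is not the computation but the functional-analytic bookkeeping: one must check that the weight $x_i(1-x_i)x_j(1-x_j)$ built into $A$ matches the weights defining $\tilde{\mathcal{V}}$ precisely well enough to control $(A\nabla u)\cdot\nabla v$ by the two seminorms, and one must be careful that $\div(A\nabla u)$ only exists as a distribution (not an $L^2$ function) when $u\in\tilde{\mathcal{V}}$, so that the left-hand side is defined through the density extension rather than pointwise. Everything then rests on Lemma~\ref{lemma:density}, which is exactly the tool that lets the elementary smooth-function identity be promoted to the full space.
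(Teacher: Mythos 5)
Your proof is correct and follows the same route as the paper: the paper's proof is the single line ``It follows from Lemma~\ref{lemma:density},'' i.e.\ exactly the density argument you carry out in detail. Your added work --- checking that the weights $x_i(1-x_i)$ in $A$ match those in the seminorm $|\cdot|_{\tilde{\mathcal{V}}}$ so that $(A\nabla u)\cdot\nabla v$ is controlled by $|u|_{\tilde{\mathcal{V}}}|v|_{\tilde{\mathcal{V}}}$, and interpreting the left-hand side distributionally before passing to the limit --- is precisely the bookkeeping the paper leaves implicit.
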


\begin{proof}
  It follows from Lemma~\ref{lemma:density}.
\end{proof}

Therefore, multiplying $-\tilde{\mathcal{L}}u$ by a test function
$v\in\mathcal{C}^{\infty}_c(\Omega)$ and then using~\eqref{eq:integration_by_parts}, we get the following bilinear form:

\begin{equation}
  \label{eq:biln_form_b_tilde}
  \tilde{b}_t(u,v)=\int_{\Omega}(A\nabla u)\nabla v-\int_{\Omega}(a\nabla u)v+\int_{\Omega}ruv.
\end{equation}

\noindent where $a=(a_1,\ldots ,a_d):\Omega\mapsto\mathbb{R}^d$ is the vector field with
$i$-th component given by

\begin{equation}
\label{a_vector_def}
a_i(x_1,\ldots ,x_d)=x_i(1-x_i)\left[r+\sigma^2_ix_i-\sigma^2_i+\frac{\sigma_i}{2}\sum^d_{j=1,j\neq  i}\rho_{ij}\sigma_j(2x_j-1)\right],  
\end{equation}

The following result holds for the the bilinear form defined in~\eqref{eq:biln_form_b_tilde}

\begin{lemma}
\label{lemma_garding}
The bilinear form $\tilde{b}_t$ is continuous from $\tilde{\mathcal{V}}\times\tilde{\mathcal{V}}$, that is, there exists a constant $\overline{c}$ that does not depend on $t$ such that for all functions $v,w\in\tilde{\mathcal{V}}$
\begin{equation}
  \label{eq:cont_bilin_form_b_tilde}
  \tilde{b}_t(v,w)\leq\overline{c}\|v\|_{\tilde{\mathcal{V}}}|w|_{\tilde{\mathcal{V}}}
\end{equation}
Moreover, the bilinear form $\tilde{b}_t$ verifies a Garding inequality, that is, there exist two positive constants $\underline{c}>0$ and $\lambda>0$ such that for all functions $v\in\tilde{\mathcal{V}}$
\begin{equation}
  \label{eq:garding_biln_b_tilde}
  b(v,v)\geq\underline{c}|v|^2_{\tilde{\mathcal{V}}}-\lambda\|v\|^2_{L^2(\Omega)}
\end{equation}
\end{lemma}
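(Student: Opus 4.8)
The plan is to exploit the single structural feature that makes both bounds essentially mechanical: the degenerate weights $x_i(1-x_i)$ appearing in the diffusion matrix $A$ of~\eqref{A_def} and in the drift field $a$ of~\eqref{a_vector_def} are \emph{exactly} the weights that define the seminorm $|\cdot|_{\tilde{\mathcal{V}}}$ in~\eqref{eq:semi_norm_V}. Accordingly, for $v,w\in\mathcal{C}^{\infty}_c(\Omega)$ (which suffices by the density Lemma~\ref{lemma:density}) I would introduce $\xi_i:=x_i(1-x_i)\frac{\partial v}{\partial x_i}$ and $\zeta_i:=x_i(1-x_i)\frac{\partial w}{\partial x_i}$, so that $|v|^2_{\tilde{\mathcal{V}}}=\sum_{i=1}^d\|\xi_i\|^2_{L^2(\Omega)}$. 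I would also factor $a_i=x_i(1-x_i)\,b_i$, where the leftover coefficient $b_i(x)=r+\sigma^2_ix_i-\sigma^2_i+\frac{\sigma_i}{2}\sum_{j\neq i}\rho_{ij}\sigma_j(2x_j-1)$ is a polynomial, hence bounded on $\overline{\Omega}=[0,1]^d$ by some constant $M$. Once these weights are pulled out, the remaining coefficients are all bounded and constant in $t$, which is why the estimates are uniform in $t$.

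For continuity I would bound $\tilde{b}_t$ term by term with Cauchy--Schwarz. The diffusion term becomes $\int_{\Omega}(A\nabla v)\cdot\nabla w=\frac{1}{2}\sum_{i,j}\rho_{ij}\sigma_i\sigma_j\int_{\Omega}\xi_i\zeta_j$, and the boundedness of the fixed entries $\rho_{ij}\sigma_i\sigma_j$ together with Cauchy--Schwarz bounds it by $C\,|v|_{\tilde{\mathcal{V}}}|w|_{\tilde{\mathcal{V}}}$. The transport term rewrites as $-\int_{\Omega}(a\cdot\nabla v)w=-\sum_i\int_{\Omega}b_i\,\xi_i\,w$ and is bounded by $M\,|v|_{\tilde{\mathcal{V}}}\|w\|_{L^2(\Omega)}$, while the reaction term is bounded by $r\,\|v\|_{L^2(\Omega)}\|w\|_{L^2(\Omega)}$. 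Collecting these yields the continuity estimate~\eqref{eq:cont_bilin_form_b_tilde}: the leading diffusion contribution realises the sharp seminorm factor $|w|_{\tilde{\mathcal{V}}}$, while the lower-order terms contribute a factor $\|w\|_{L^2(\Omega)}\le\|w\|_{\tilde{\mathcal{V}}}$, so that overall continuity holds on $\tilde{\mathcal{V}}\times\tilde{\mathcal{V}}$ with a constant $\overline{c}$ depending only on $r$, the $\sigma_i$ and the $\rho_{ij}$.

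For the G\r{a}rding inequality I would treat the principal part first. Setting $w=v$, the diffusion term is $\int_{\Omega}(A\nabla v)\cdot\nabla v=\frac{1}{2}\int_{\Omega}\sum_{i,j}\Xi_{ij}\xi_i\xi_j$, where $\Xi_{ij}=\rho_{ij}\sigma_i\sigma_j$ is the matrix assumed positive-definite in Theorem~\ref{uniq_exis_fv_bs}. Denoting by $\mu>0$ its smallest eigenvalue, the pointwise bound $\sum_{i,j}\Xi_{ij}\xi_i\xi_j\ge\mu\sum_i\xi_i^2$ integrates to $\int_{\Omega}(A\nabla v)\cdot\nabla v\ge\frac{\mu}{2}|v|^2_{\tilde{\mathcal{V}}}$, so the principal part is coercive with respect to the seminorm. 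For the first-order part I would use the factorisation and Young's inequality $M|\xi_i||v|\le\frac{\varepsilon}{2}\xi_i^2+\frac{M^2}{2\varepsilon}v^2$, giving $-\int_{\Omega}(a\cdot\nabla v)v\ge-\frac{\varepsilon}{2}|v|^2_{\tilde{\mathcal{V}}}-\frac{dM^2}{2\varepsilon}\|v\|^2_{L^2(\Omega)}$; the reaction term $r\|v\|^2_{L^2(\Omega)}$ is nonnegative and only helps. Choosing $\varepsilon$ small enough that $\frac{\varepsilon}{2}\le\frac{\mu}{4}$ absorbs half of the principal coercivity and yields~\eqref{eq:garding_biln_b_tilde} with $\underline{c}=\frac{\mu}{4}$ and $\lambda=\frac{dM^2}{2\varepsilon}$.

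The only genuine subtlety — and the reason one obtains a G\r{a}rding inequality rather than full coercivity — is the transport term: being weighted by only one factor $x_i(1-x_i)$, it is controlled by $|v|_{\tilde{\mathcal{V}}}\|v\|_{L^2(\Omega)}$ but not by the seminorm alone, so it must be absorbed into the principal part at the cost of a negative multiple of $\|v\|^2_{L^2(\Omega)}$. The whole argument therefore hinges on two structural facts: that $a_i$ genuinely carries the factor $x_i(1-x_i)$ (so the residual $b_i$ is bounded on $[0,1]^d$), and that the positive-definiteness of $\Xi$ transfers coercivity to the weighted seminorm; everything else is a routine application of Cauchy--Schwarz and Young's inequalities.
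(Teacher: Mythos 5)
Your proof is correct, but it handles the first-order term by a different mechanism than the paper. The paper's (very terse) argument integrates by parts: for $v\in\mathcal{C}^{\infty}_c(\Omega)$ one has $\int_{\Omega}p\,\partial_{x_i}v\,v=\tfrac{1}{2}\int_{\Omega}p\,\partial_{x_i}(v^2)=-\tfrac{1}{2}\int_{\Omega}\partial_{x_i}p\,v^2$, and since the coefficients $a_i$ are polynomials on the bounded set $[0,1]^d$, their derivatives are bounded, so the transport term is converted directly into a zeroth-order term controlled by $\|v\|^2_{L^2(\Omega)}$; density (Lemma~\ref{lemma:density}) kills the boundary terms, and nothing needs to be absorbed into the principal part, so $\underline{c}$ can be taken equal to the full coercivity constant of the diffusion term. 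You instead factor $a_i=x_i(1-x_i)\tilde a_i$, estimate the transport term by $M\,|v|_{\tilde{\mathcal{V}}}\|v\|_{L^2(\Omega)}$ via Cauchy--Schwarz, and absorb it with Young's inequality at the cost of half the principal coercivity. Both routes are valid and yield constants independent of $t$; the paper's trick is slightly slicker and exploits the polynomial structure more directly, while yours is more robust (it only needs $\tilde a\in L^{\infty}$, not differentiability of the coefficients, and it is exactly the estimate reused later in the stability proof of Proposition~\ref{prop:stability}). Your treatment of the principal part via the smallest eigenvalue of $\Xi$ is the same as Lemma~\ref{lemma:A_bounded}. One remark: the continuity bound you actually prove is $\tilde b_t(v,w)\leq\overline{c}\,\|v\|_{\tilde{\mathcal{V}}}\|w\|_{\tilde{\mathcal{V}}}$, with the full norm of $w$; this is the correct statement, since the inequality as printed in~\eqref{eq:cont_bilin_form_b_tilde} with only the seminorm $|w|_{\tilde{\mathcal{V}}}$ fails for $v=w=1$ (the reaction term gives $r|\Omega|>0$ while the right-hand side vanishes), so the seminorm there is evidently a typo.
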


\begin{proof}
  The Garding inequality is obtained by observing that the first order term
  satisfies the following:
  \begin{equation*}
    \left|\int_{\Omega}p(x_1,\ldots ,x_d)\frac{\partial v}{\partial x_i}v \right|\leq\frac{1}{2}\left|\int_{\Omega}\frac{\partial p}{\partial x_i}(x_1,\ldots ,x_d)v^2 \right|,
  \end{equation*}
where $p(x_1,\ldots ,x_d)$ is a polynomial. The proof of the continuity uses the same arguments.  
\end{proof}

Thus, we obtain the following result concerning the existence and uniqueness of the solution for the weak formulation associated to the problem~\eqref{eq:op_bs_edp_x}.

\begin{proposition}
\label{chg_variables}  
For all function $g\in L^2(\Omega)$, there exists a unique $u\in L^2(0,T;\tilde{\mathcal{V}})\cap\mathcal{C}^0([0,T];L^2(\Omega))$, with $\frac{\partial u}{\partial t}\in L^2(0,T;\tilde{\mathcal{V}}')$ such that for any function $\phi\in\mathcal{D}(0,T)$, for all function $v\in\tilde{\mathcal{V}}$,
\begin{equation}
  \label{fv_with_chg_variables}
  -\int^T_{0}\phi'(t)\int_{\Omega}u(t)vdt+\int^T_{0}\phi(t)\tilde{b}(u,v)dt=0,
\end{equation}
and
\begin{equation}
  u(t=0)=g
\end{equation}

Moreover, $u$ solution of~\eqref{fv_with_chg_variables} is related to
$\hat{P}$ solution of~\eqref{fv_bs} by the functions defined in~\eqref{eq:chang_var_direct} and~\eqref{eq:chang_var_inverse}.   
\end{proposition}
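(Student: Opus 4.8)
The plan is to split the statement into two independent parts: first, the well-posedness (existence and uniqueness) of the weak formulation \eqref{fv_with_chg_variables} on the bounded domain $\Omega=(0,1)^d$, and second, the identification of its solution $u$ with the transform of the solution $\hat P$ of \eqref{fv_bs} under the change of variables \eqref{eq:chang_var_direct}--\eqref{eq:chang_var_inverse}. The first part is a direct application of the abstract theory of linear parabolic evolution equations (J.-L.\ Lions' theorem), exactly as for Theorem~\ref{uniq_exis_fv_bs}; the second part is a change-of-variables argument combined with the two uniqueness statements.

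For the first part, I would set up the Gelfand triple $\tilde{\mathcal{V}}(\Omega)\hookrightarrow L^2(\Omega)\hookrightarrow \tilde{\mathcal{V}}(\Omega)'$. The embedding is continuous by the very definition of the norm $\|\cdot\|_{\tilde{\mathcal V}}$, and it is dense because $\mathcal{C}^\infty_c(\Omega)\subset\tilde{\mathcal V}(\Omega)$ is already dense in $L^2(\Omega)$; Lemma~\ref{lemma:density} moreover guarantees that $\tilde{\mathcal V}(\Omega)$ is a separable Hilbert space in which smooth compactly supported functions are dense (completeness being routine). I would then invoke Lemma~\ref{lemma_garding}: the continuity \eqref{eq:cont_bilin_form_b_tilde} of $\tilde b_t$ and the Garding inequality \eqref{eq:garding_biln_b_tilde}. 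Since the abstract existence theorem is usually stated under a coercivity rather than a Garding hypothesis, I would remove the constant $\lambda$ by the exponential change of unknown $u(t)=e^{\lambda t}w(t)$: this replaces $\tilde b_t$ by $\tilde b_t+\lambda(\cdot,\cdot)_{L^2(\Omega)}$, which is $\tilde{\mathcal V}$-coercive by \eqref{eq:garding_biln_b_tilde}. Lions' theorem then yields a unique $w\in L^2(0,T;\tilde{\mathcal V})\cap\mathcal{C}^0([0,T];L^2(\Omega))$ with $\partial_t w\in L^2(0,T;\tilde{\mathcal V}')$ solving the shifted problem with datum $g$, and undoing the shift produces the required $u$. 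This is the same scheme as the proof of Theorem 2.11 in~\cite{achdou-pironneau-05} underlying Theorem~\ref{uniq_exis_fv_bs}.

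For the second part, the crucial algebraic fact is that under $S_i=\Psi^{-1}(x_i)$ the degenerate weight of $\tilde{\mathcal V}$ matches the weight $S_i$ of $\mathcal V(\mathbb R^d_+)$; a short computation from \eqref{eq:chang_var_inverse} gives $x_i(1-x_i)\frac{\partial}{\partial x_i}=S_i\frac{\partial}{\partial S_i}$. Consequently the pullback $\hat P\mapsto u$, with $u(t,x):=\hat P(t,\Psi^{-1}(x))$, is an isomorphism of $\mathcal V(\mathbb R^d_+)$ onto $\tilde{\mathcal V}(\Omega)$ and transports the required space-time regularity. I would then check that $u$ satisfies \eqref{fv_with_chg_variables}: since the operator $\tilde{\mathcal L}$ of \eqref{eq:op_bs_edp_x} is exactly the image of $\mathcal L$ under the chain rule, multiplying $-\partial_t u+\tilde{\mathcal L}u=0$ by $v\in\mathcal C^\infty_c(\Omega)$, integrating over $\Omega$, using the integration-by-parts formula \eqref{eq:integration_by_parts} (justified by Lemma~\ref{lemma:density}) for the divergence term, and integrating in time against $\phi\in\mathcal D(0,T)$ produces precisely \eqref{fv_with_chg_variables}, with $g=f\circ\Psi^{-1}$; density of $\mathcal C^\infty_c(\Omega)$ in $\tilde{\mathcal V}$ then extends the identity to all $v\in\tilde{\mathcal V}$. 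By the uniqueness proved in the first part, this $u$ is the solution of \eqref{fv_with_chg_variables}, which is the announced relation.

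The main obstacle lies in this second part: one must transform the \emph{weak} formulation, not merely the classical PDE, and the two natural pairings differ by the Jacobian $\prod_i \frac{dS_i}{dx_i}$ of the change of variables. I would therefore either pass rigorously through the strong form (legitimate because $\hat P$, hence $u$, has enough regularity for \eqref{eq:integration_by_parts} to apply after the density argument), or keep track of the Jacobian weight when substituting directly into \eqref{fv_bs} and absorb it into the choice of test function, using that $v$ ranges over all of $\tilde{\mathcal V}$. Verifying that $u$ indeed belongs to $L^2(0,T;\tilde{\mathcal V})\cap\mathcal C^0([0,T];L^2(\Omega))$ with $\partial_t u\in L^2(0,T;\tilde{\mathcal V}')$, i.e.\ that all weighted norms remain finite after the change of variables that is singular at $x_i=1$, is the point requiring the most care, and is exactly where the identity $x_i(1-x_i)\partial_{x_i}=S_i\partial_{S_i}$ does the essential work.
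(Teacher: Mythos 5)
Your proposal is correct and follows essentially the same route as the paper, whose entire proof is the one-line remark that the result ``can be deduced from Lemma~\ref{lemma_garding} using standard techniques, see~\cite{achdou-pironneau-05}'' --- i.e.\ exactly the Gelfand-triple/Lions argument with the exponential shift absorbing the Garding constant that you spell out, plus the change-of-variables identification that the paper does not detail at all. The only slight overstatement is calling the pullback an \emph{isomorphism} of $\mathcal V(\mathbb R^d_+)$ onto $\tilde{\mathcal V}(\Omega)$ (the Jacobian $\prod_i(1-x_i)^{-2}$ makes it a continuous injection rather than a surjection), but this is harmless since you only use the direction $\hat P\in\mathcal V\Rightarrow u\in\tilde{\mathcal V}$ and you correctly flag the Jacobian in the test-function pairing as the delicate point.
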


This proposition can be deduced from Lemma~\ref{lemma_garding} using
standard techniques, see~\cite{achdou-pironneau-05}.

\subsection{The IMEX scheme and the Black-Scholes equation as a minimization problem}
\label{subsection:imex_min_pb}
%% It is important to remark that we have to deal with three different types of errors. Indeed, we have the
%% error related to the approximation in time, related to the approximation in
%% space and finally the error introduced by the greedy algorithm. Thus, in
%% practice, to study the performance of the greedy algorithm, we consider parameters $\Delta x$ and
%% $\Delta t$ that introduce an error negligible with respect to the error
%% related to the greedy algorithm.

To apply the greedy algorithm~\eqref{def_greedy_algo}, our goal is to rewrite
the problem~\eqref{eq:op_bs_edp_x} as a minimization problem. As a first step, we propose to use an Euler scheme to discretize the
problem in time. Let us consider a time discretization grid of $M+1$
points, $\tau_0=0\leq\ldots\leq\tau_M=T$, where $\tau_i=i\Delta t$ and $\Delta
t=\frac{T}{M}$. We introduce a time discretization of the variational formulation~\eqref{fv_with_chg_variables} where we treat explicitly the non-symmetric
part of $\tilde{b}_t$ and implicitly its symmetric terms (IMEX
scheme). 

For $i=1,\ldots ,M$, find $u^i\in\tilde{\mathcal{V}}$ such that

\begin{equation}
  \label{eq:bs_edp_imex}
\begin{split}
  \int_{\Omega}u^iv+\frac{\Delta t}{1+r\Delta t}\int_{\Omega}(A\nabla
  u^i)\nabla v-\frac{\Delta t}{2(1+r\Delta t)}\left[\int_{\Omega}(a\nabla u^i)v+\int_{\Omega}(a\nabla v)u^i\right] \\ 
 = \frac{1}{1+r\Delta t}\int_{\Omega}u^{i-1}v+\frac{\Delta t}{2(1+r\Delta
   t)}\left[\int_{\Omega}(a\nabla u^{i-1})v-\int_{\Omega}(a\nabla
   v)u^{i-1}\right],\;\;\forall v\in\tilde{\mathcal{V}}.
\end{split}
\end{equation}

Thus, using that the left hand side of the equation~\eqref{eq:bs_edp_imex} is
symmetric in $u^i$ and $v$, we are led to solve the following sequence of minimization problems

 For $i=1,\ldots ,M$:

\begin{equation}
  \label{eq:min_pb_bs_imex}
  \text{Find }u^i\in \tilde{\mathcal{V}}(\Omega)\text{ such that }u^i=\argmin_{u\in \tilde{\mathcal{V}}(\Omega)}\mathcal{E}_i(u)
\end{equation}

\noindent where 

\begin{equation}
  \label{eq:energy_i}
  \begin{split}
  \mathcal{E}_i(u)&=\frac{1}{2}\int_{\Omega}|u|^2+\frac{\Delta t}{2(1+r\Delta t)}\left[\int_{\Omega}(A\nabla
  u)\nabla u-\int_{\Omega}(a\nabla u)u\right]\\
&-\frac{1}{1+r\Delta
  t}\int_{\Omega}u^{i-1}u-\frac{\Delta t}{2(1+r\Delta
  t)}\left[\int_{\Omega}(a\nabla u^{i-1})u-\int_{\Omega}(a\nabla
  u)u^{i-1}\right].
\end{split}
\end{equation}

Let us introduce the bilinear symmetric form $\hat{a}(u,v)$

\begin{equation}
  \label{sym_form_hat_a}
  \hat{a}(u,v)=\int_{\Omega}uv+\frac{\Delta t}{1+r\Delta t}\int_{\Omega}(A\nabla
  u)\nabla v-\frac{\Delta t}{2(1+r\Delta t)}\left[\int_{\Omega}(a\nabla
    u)v+\int_{\Omega}(a\nabla v)u  \right],\;\;\forall u,v\in \tilde{\mathcal{V}}
\end{equation}

\noindent and the linear form

\begin{equation}
 \label{lin_form_L} 
  L_{i-1}(v)=\frac{1}{1+r\Delta t}\int_{\Omega}u^{i-1}v+\frac{\Delta t}{2(1+r\Delta
    t)}\left[\int_{\Omega}(a\nabla u^{i-1})v-\int_{\Omega}(a\nabla
    v)u^{i-1}\right],\;\;\forall v\in \tilde{\mathcal{V}}.
\end{equation}

We have that $\E_i(u)=\frac{1}{2}\hat{a}(u,u)-L_{i-1}(u)$.

\subsection{Stability analysis for the IMEX scheme}
\label{section:stability_analysis}

In this section, we study the $L^2$-stability of the IMEX
scheme~\eqref{eq:bs_edp_imex}. Let us consider $u^n$ the solution of the problem~\eqref{eq:min_pb_bs_imex} at time $\tau_n$. In our context, the meaning of stability is given in the following definition.

\begin{definition}
  The numerical scheme~\eqref{eq:bs_edp_imex} is $L^2$-stable if there exists a constant $C>0$ that does not depend on the discretization parameter $\Delta t$ such that for any initial condition $u^0$ and for all $n\geq 0$
  \begin{equation*}
    \|u^n\|_{L^2}\leq C\|u^0\|_{L^2}
  \end{equation*}
\end{definition}

We will see that the scheme~\eqref{eq:bs_edp_imex} is $L^2$-stable under a condition on the time step $\Delta t$. For the sake of
simplicity we take $r=0$. Let us begin with the following lemma:

\begin{lemma}
\label{lemma:A_bounded}  
Let us assume that the matrix $\Xi$ defined by
$\Xi_{i,j}=\rho_{i,j}\sigma_i\sigma_j,\; i,j=1,\ldots ,d$, is positive-definite. Then there exists a constant $\alpha>0$ such that  

\begin{equation}
  \label{stab_ineq}
  \int_{\Omega}\left(A\nabla u\right)\nabla
  u\geq\alpha|u|^2_{\tilde{\mathcal{V}}},\;\;\;\forall u\in\tilde{\mathcal{V}}. 
\end{equation}

\end{lemma}

\begin{proof}
  Using that the matrix $\Xi$ is positive-definite, we have

  \begin{equation*}
    \int_{\Omega}\sum^d_{i,j=1}\frac{\rho_{ij}\sigma_i\sigma_j}{2}x_i(1-x_i)\frac{\partial
  u}{\partial x_i}x_j(1-x_j)\frac{\partial u}{x_j}=\int_{\Omega}Y^T\Xi Y\geq \left(\min_{\lambda\in Sp(\Xi)}\lambda \right) |u|^2_{\tilde{\mathcal{V}}}
  \end{equation*}

  \noindent where $Sp(\Xi)$ denotes the set of eigenvalues of the matrix $\Xi$ and $Y$ is the
  vector such that $Y_i=x_i(1-x_i)\frac{\partial u}{\partial x_i}$. This proves~\eqref{stab_ineq} with $\alpha=\min_{\lambda\in Sp(\Xi)}\lambda$.   
\end{proof}

Now, we can state the following proposition:

\begin{proposition}
  \label{prop:stability}
  The scheme proposed in~\eqref{eq:bs_edp_imex} is $L^2$-stable under the following
  condition on the time step $\Delta t$

\begin{equation}
 \label{cond_stab_ineq_6}
\Delta t<\frac{1}{2\left(\frac{4\left(\|\tilde{a}\|_{\infty}+\|\div(a)\|_{\infty}\right)}{\alpha}+\frac{\alpha}{2}\right)}
\end{equation}

\noindent where the constant $\alpha$ is defined in Lemma~\ref{lemma:A_bounded} and $\tilde{a}$ is the vector such that for all $i=1,\ldots d$

\begin{equation*}
\tilde{a}_i=\left[r+\sigma^2_ix_i-\sigma^2_i+\frac{\sigma_i}{2}\sum^d_{j=1,j\neq i}\rho_{i,j}\sigma_j(2x_j-1)\right].
\end{equation*}

\end{proposition}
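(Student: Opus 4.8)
The plan is to run the classical energy method on the one-step scheme~\eqref{eq:bs_edp_imex} with $r=0$: take the test function $v=u^i$, bound the (symmetric, implicit) left-hand side from below and the (explicit) right-hand side from above, and close with a discrete Gr\"onwall argument. The target is a one-step estimate $\|u^i\|^2_{L^2}\le(1+C\Delta t)\|u^{i-1}\|^2_{L^2}$ with $C$ independent of $\Delta t$, since then $\|u^n\|_{L^2}\le(1+C\Delta t)^{T/\Delta t}\|u^0\|_{L^2}\le e^{CT/2}\|u^0\|_{L^2}$ as $n\le M=T/\Delta t$. Throughout I use that $\tilde a$ and $\div(a)$ belong to $L^\infty(\Omega)$ because $a_i=x_i(1-x_i)\tilde a_i$ is polynomial on the compact cube $\overline\Omega=[0,1]^d$.

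On the left-hand side, taking $v=u^i$ makes the two symmetric first-order terms coincide and combine into $-\Delta t\int_\Omega(a\nabla u^i)u^i$. Using the integration-by-parts identity already exploited in the proof of Lemma~\ref{lemma_garding}, licensed by the density of $\mathcal{C}^\infty_c(\Omega)$ in $\tilde{\mathcal V}$ (Lemma~\ref{lemma:density}, the weight $x_i(1-x_i)$ killing the boundary contributions), this equals $\tfrac{\Delta t}{2}\int_\Omega\div(a)\,(u^i)^2$, which is controlled in absolute value by $\tfrac{\Delta t}{2}\|\div(a)\|_\infty\|u^i\|^2_{L^2}$. The diffusion term is bounded below by $\alpha|u^i|^2_{\tilde{\mathcal V}}$ thanks to Lemma~\ref{lemma:A_bounded}. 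Hence the left-hand side dominates $\|u^i\|^2_{L^2}+\Delta t\,\alpha|u^i|^2_{\tilde{\mathcal V}}-\tfrac{\Delta t}{2}\|\div(a)\|_\infty\|u^i\|^2_{L^2}$.

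On the right-hand side, the term $\int_\Omega u^{i-1}u^i$ is split by Young's inequality, and the explicit antisymmetric convective term $\tfrac{\Delta t}{2}\big[\int_\Omega(a\nabla u^{i-1})u^i-\int_\Omega(a\nabla u^i)u^{i-1}\big]$ is the delicate one. I would integrate by parts once to move the derivative off $u^{i-1}$, rewriting it as a combination of $\int_\Omega\div(a)\,u^{i-1}u^i$ (harmless, bounded by $\|\div(a)\|_\infty$ and Young) and $\int_\Omega(a\nabla u^i)u^{i-1}$. In the latter I use the structure $a_i=x_i(1-x_i)\tilde a_i$ from~\eqref{a_vector_def}, so that $a\nabla u^i$ reconstitutes exactly the weighted derivatives $x_i(1-x_i)\partial_i u^i$ of the seminorm~\eqref{eq:semi_norm_V}; this term is therefore bounded by $\Delta t\|\tilde a\|_\infty|u^i|_{\tilde{\mathcal V}}\|u^{i-1}\|_{L^2}$, and a Young's inequality with parameter tuned to $\alpha$ splits it into a $|u^i|^2_{\tilde{\mathcal V}}$ piece absorbed by the coercive $\Delta t\,\alpha|u^i|^2_{\tilde{\mathcal V}}$ on the left and an $\|u^{i-1}\|^2_{L^2}$ piece.

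The main obstacle is precisely this explicit, degenerately weighted convective term: carrying a derivative of $u^i$ weighted by $x_i(1-x_i)$, it lives at the level of the $\tilde{\mathcal V}$-seminorm and cannot be absorbed by $L^2$-norms alone, so the only available reservoir is the implicit diffusion, and balancing the two through Young's inequality is exactly what forces a smallness condition on $\Delta t$. After collecting all $L^2$ contributions and absorbing the seminorm of $u^i$, one is left with $(1-\gamma\Delta t)\|u^i\|^2_{L^2}\le(1+\gamma'\Delta t)\|u^{i-1}\|^2_{L^2}$; the threshold~\eqref{cond_stab_ineq_6} is the value of $\Delta t$ below which the left coefficient stays positive and $\tfrac{1+\gamma'\Delta t}{1-\gamma\Delta t}$ is of the form $1+C\Delta t$, making the discrete Gr\"onwall bound uniform in $\Delta t$. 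The remaining work, namely tracking the explicit constant $\tfrac{4(\|\tilde a\|_\infty+\|\div(a)\|_\infty)}{\alpha}+\tfrac{\alpha}{2}$, is the routine bookkeeping of the Young parameters.
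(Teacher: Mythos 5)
Your proposal is correct and follows essentially the same route as the paper: take $v=u^i$ in~\eqref{eq:bs_edp_imex} with $r=0$, bound the diffusion from below by $\alpha|u^i|^2_{\tilde{\mathcal{V}}}$ via Lemma~\ref{lemma:A_bounded}, integrate the convective terms by parts (licensed by Lemma~\ref{lemma:density}) so that only $\|\tilde a\|_\infty$ and $\|\div(a)\|_\infty$ appear, absorb the resulting $|u^i|_{\tilde{\mathcal{V}}}$ contributions into the coercive term by Young's inequality with parameter $\epsilon$ of order $\alpha$, and conclude by discrete Gr\"onwall. The only organizational difference is that the paper retains the increment term $\frac{1}{2\Delta t}\|u^i-u^{i-1}\|^2_{L^2}$ from the polarization identity and splits the cross terms against it (writing $u^i+u^{i-1}=2u^i+(u^{i-1}-u^i)$), whereas you move all derivatives onto $u^i$; this may change the numerical factors in the threshold~\eqref{cond_stab_ineq_6} but not the structure or validity of the stability condition.
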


\begin{proof}

  Let us take $r=0$ and $v=u^i$ in the variational
  formulation~\eqref{eq:bs_edp_imex}. Thus, we obtain

  \begin{eqnarray}
 \label{stability_analysis} 
 & & \frac{1}{2\Delta t}\left(\int_{\Omega}|u^i|^2-|u^{i-1}|^2\right)+\frac{1}{2\Delta t}\int_{\Omega}|u^i-u^{i-1}|^2+\int_{\Omega}(A\nabla u^i)\nabla u^i-\int_{\Omega}(a\nabla u^i)u^i  \nonumber \\
& & = \frac{1}{2}\left[\int_{\Omega}(a\nabla u^{i-1})u^i-\int_{\Omega}(a\nabla u^i)u^{i-1} \right] \nonumber \\ 
& & \Leftrightarrow \frac{1}{2\Delta t}\left(\int_{\Omega}|u^i|^2-|u^{i-1}|^2\right)+\frac{1}{2\Delta t}\int_{\Omega}|u^i-u^{i-1}|^2+\int_{\Omega}(A\nabla u^i)\nabla u^i \nonumber \\
& & -\frac{1}{2}\int_{\Omega}(a\nabla (u^i+u^{i-1}))u^i-\frac{1}{2}\int_{\Omega}(a\nabla u^i)(u^i-u^{i-1})=0. 
\end{eqnarray}

Moreover, we note that $a_i=x_i(1-x_i)\tilde{a}_i$ and that $\tilde{a}\in L^{\infty}(\Omega)$.

Thus, for all $\epsilon>0$ we have that

\begin{equation}
\label{stab_ineq_1}
\left|\int_{\Omega}(a\nabla u^i)(u^i-u^{i-1})\right|\leq\epsilon|
u^i|^2_{\tilde{\mathcal{V}}}+\frac{\| \tilde{a}\|_{\infty}}{4\epsilon}\int_{\Omega}|u^i-u^{i-1}|^2.  
\end{equation}

Besides, using the integration by parts given
by~\eqref{eq:integration_by_parts} to study the term
$\int_{\Omega}(a\nabla(u^i+u^{i-1}))u^i$, we observe that

\begin{equation}
  \label{stab_ineq_2}
  \begin{split}
  \left|\int_{\Omega}a\nabla u^i(u^i+u^{i-1})\right|&=\left|\int_{\Omega}a\nabla
  u^{i}\left(2u^i+(u^{i-1}-u^i)\right)\right|, \\
  &\leq\epsilon|u^i|^2_{\tilde{\mathcal{V}}}+\frac{\|\tilde{a}\|_{\infty}}{4\epsilon}\left(\int_{\Omega}2|u^i-u^{i-1}|^2+\int_{\Omega}8|u^i|^2\right)
  \end{split}
\end{equation}

\noindent and

\begin{equation}
  \label{stab_ineq_3}
\left|\int_{\Omega}\div(a)u^i(u^i+u^{i-1})\right|\leq\frac{\|\div(a)\|_{\infty}}{4\epsilon}\left(\int_{\Omega}2|u^i-u^{i-1}|^2+\int_{\Omega}8|u^i|^2\right)+\epsilon\int_{\Omega}|u^i|^2. 
\end{equation}

Then, using~\eqref{stab_ineq_1},~\eqref{stab_ineq_2} and~\eqref{stab_ineq_3}, we
deduce from~\eqref{stability_analysis} that

\begin{equation}
  \label{stab_ineq_4}
  \begin{split}
  &\left[\frac{1}{2\Delta
      t}-\frac{\|\tilde{a}\|_{\infty}}{\epsilon}-\frac{\|\div(a)\|_{\infty}}{\epsilon}-\frac{\epsilon}{2}\right]\int_{\Omega}|u^i|^2+(\alpha-\epsilon)|u^i|^2_{\tilde{\mathcal{V}}} \\
  &+\left[\frac{1}{2\Delta
      t}-\frac{\|\tilde{a}\|_{\infty}}{4\epsilon}-\frac{\|\div(a)\|_{\infty}}{4\epsilon}\right]\int_{\Omega}|u^i-u^{i-1}|^2\leq\frac{1}{2\Delta
  t}\int_{\Omega}|u^{i-1}|^2.
  \end{split}
\end{equation}

So, if we assume the condition~\eqref{cond_stab_ineq_6} and we choose $\epsilon=\frac{\alpha}{2}$, then we can deduce the following
three inequalities:

\begin{equation}
  \alpha>\epsilon,\; \frac{1}{2\Delta
  t}-\frac{\|\tilde{a}\|_{\infty}+\|\div(a)\|_{\infty}}{\epsilon}-\frac{\epsilon}{2}>0\text{ and }\frac{1}{2\Delta t}>\frac{\|\tilde{a}\|_{\infty}+\|\div(a)\|_{\infty}}{4\epsilon}. 
\end{equation}

Consequently, we get

\begin{equation*}
  \begin{split}
  \int_{\Omega}|u^i|^2&\leq\frac{1}{1-C\Delta t}\int_{\Omega}|u^{i-1}|^2\\
  &\leq (1+2C\Delta t)\int_{\Omega}|u^{i-1}|^2 \\
  &\leq (1+2C\Delta t)^M\int_{\Omega}|u^{0}|^2 \\
  &\leq e^{2CT}\int_{\Omega}|u^{0}|^2, \\
  \end{split}
\end{equation*}

\noindent where $C=2\left(\frac{(\|\tilde{a}\|_{\infty}+\|\div(a)\|_{\infty})}{\epsilon}+\frac{\epsilon}{2}\right)$ is a constant which is independent of the discretization parameter~$\Delta t$.
\end{proof}

\subsection{Implementation of the greedy algorithm for the Black-Scholes PDE}
\label{subsection:implementation_greedy_algo}

To simplify the notation we consider the case of only three dimensions, but
the definition of the algorithm and all the equations below can be easily generalized to a $d$-dimensional framework.

We recall that the greedy algorithm will generate the function $u^i$ in the
separated representation:

\begin{equation*}
  \label{eq:u_i}
  u^{i}(x_1,x_2,x_3)=\sum_{k\geq 1}r^{i}_k\otimes s^{i}_k\otimes t^{i}_k(x_1,x_2,x_3).
\end{equation*}

The greedy algorithm~\eqref{eq:min_pb_bs_imex} is defined as follows: For $i=1,\ldots ,M$, iterate on $n\geq 1$

\begin{equation}
\label{min_pb_bs_imex_greedy}
(r^i_n,s^i_n,t^i_n) \in\argmin_{\begin{array}{l} r\in \tilde{\mathcal{V}}(\Omega_1),\\
      s\in \tilde{\mathcal{V}}(\Omega_2),\\
    t\in \tilde{\mathcal{V}}(\Omega_3)\end{array}} \frac{1}{2}\hat{a}(r\otimes s\otimes t,r\otimes s\otimes t)-L_{i-1}(r\otimes s\otimes t)-\hat{a}\left(\sum^{n-1}_{k=1}r^i_k\otimes s^i_k\otimes t^i_k,r\otimes s\otimes t\right)
\end{equation}

\noindent where $\hat{a}$ is defined by~\eqref{sym_form_hat_a} and $L_{i-1}$ by~\eqref{lin_form_L}.

Then, the Euler equation associated with the problem~\eqref{min_pb_bs_imex_greedy}, that is used in practice to implement the algorithm, is given by:

Find $(r^i_n,s^i_n,t^i_n)\in\tilde{\mathcal{V}}(\Omega_1)\times\tilde{\mathcal{V}}(\Omega_2)\times\tilde{\mathcal{V}}(\Omega_3)$ such that for any functions $(r,s,t)\in\tilde{\mathcal{V}}(\Omega_1)\times\tilde{\mathcal{V}}(\Omega_2)\times\tilde{\mathcal{V}}(\Omega_3)$
\small
\begin{equation}
  \label{bs_euler_lagrange}
  \begin{split}
\hat{a}(r^i_n\otimes s^i_n\otimes t^i_n,r\otimes s^i_n\otimes
t^i_n+r^i_n\otimes s\otimes t^i_n+r^i_n\otimes s^i_n\otimes t)&=
L_{i-1}(r\otimes s^i_n\otimes t^i_n+r^i_n\otimes s\otimes t^i_n+r^i_n\otimes
s^i_n\otimes t) \\
+\hat{a}\left(r\otimes s^i_n\otimes t^i_n+r^i_n\otimes s\otimes t^i_n+r^i_n\otimes
s^i_n\otimes t,\sum^{n-1}_{k=1}r^i_k\otimes s^i_k\otimes t^i_k\right)&
\end{split}
\end{equation}
\normalsize
Henceforth, we will consider without loss of generality $s=0$ and $t=0$ in order to study in detail each term of this
Euler equation~\eqref{bs_euler_lagrange}. We recall that the Euler equation is
solved using a fixed point procedure as in~\eqref{e_l_svd_system_fixed_point}.

\begin{remark}

All the high-dimensional integrals in~\eqref{bs_euler_lagrange} are easily
calculated using Fubini's rule because the functions in these integrals are separable except for
$i=1$ where the term $u^0$ appears as follows:

\begin{equation}
\label{high_dim_integral}
\int_{\Omega_1\times\Omega_2\times\Omega_3}r\otimes s^i_n\otimes t^i_n(x_1,x_2,x_3)u^0(x_1,x_2,x_3)dx_1,dx_2dx_3,
\end{equation}

The idea used to overcome this practical obstacle is to approximate, in a
preliminary step, the initial condition $u^0$ of the Black-Scholes PDE by a sum of
tensor products as explained in Section~\ref{section_initial_condition_svd}. Once the initial condition $u^0$ has a separated approximation:

\begin{equation*}
  \label{eq:u_0}
  u^{0}(x_1,x_2,x_3)=\sum_{k\geq 1}r^{0}_k\otimes s^{0}_k\otimes t^{0}_k(x_1,x_2,x_3).
\end{equation*}

\noindent the integral~\eqref{high_dim_integral} is easy to compute using Fubini's rule.
\end{remark}

Using the space discretization described in Section~\ref{section_initial_condition_svd} and the notation given by~\eqref{notation_r_s}, the following vectors
will be used:

\begin{equation*}
\mathbf{r^i_n}=[r^i_{n,0},\ldots ,r^i_{n,N}]^T,\;\;\;\mathbf{s^i_n}=[s^i_{n,0},\ldots ,s^i_{n,N}]^T,\;\;\;\mathbf{t^i_n}=[t^i_{n,0},\ldots ,t^i_{n,N}]^T,  
\end{equation*}

Given the fact that all the terms in the
equation~\eqref{bs_euler_lagrange} admits a separated representation, the equation~\eqref{bs_euler_lagrange} can be written in a matrix form.

The following matricial expressions allow us to deduce the matricial form
for the equation~\eqref{bs_euler_lagrange}: 

\begin{equation*}
  \int_{\Omega_1\times\Omega_2\times\Omega_3}(r^i_n\otimes s^i_n\otimes t^i_n)(r\otimes
  s^i_n\otimes t^i_n)=[\mathbf{t^i_n}^TM\mathbf{t^i_n}][\mathbf{s^i_n}^TM\mathbf{s^i_n}]M\mathbf{r^i_n},
\end{equation*}

\begin{equation*}
\small
  \begin{split}
\int_{\Omega_1\times\Omega_2\times\Omega_3}\left(A\nabla (r^i_n\otimes s^i_n\otimes
    t^i_n)\right)\nabla (r\otimes s^i_n\otimes
  t^i_n)&=\left(\frac{\sigma^2_2}{2}[\mathbf{t^i_n}^TM\mathbf{t^i_n}][\mathbf{s^i_n}^TL\mathbf{s^i_n}]+\frac{\sigma^2_3}{2}[\mathbf{t^i_n}^TL\mathbf{t^i_n}][\mathbf{s^i_n}^TM\mathbf{s^i_n}]\right)M\mathbf{r^i_n}
  \\
&+\left(\frac{\rho_{1,2}\sigma_1\sigma_2}{2}[\mathbf{t^i_n}^TM\mathbf{t^i_n}][\mathbf{s^i_n}^TD\mathbf{s^i_n}]+\frac{\rho_{1,3}\sigma_1\sigma_3}{2}[\mathbf{t^i_n}^TD\mathbf{t^i_n}][\mathbf{s^i_n}^TM\mathbf{s^i_n}]\right)
\\
&(D+D^T)\mathbf{r^i_n}+\frac{\sigma^2_1}{2}[\mathbf{s^i_n}^TM\mathbf{s^i_n}][\mathbf{t^i_n}^TM\mathbf{t^i_n}]L\mathbf{r^i_n}, 
  \end{split}
\end{equation*}

\begin{equation*}
\small
  \begin{split}
  \int_{\Omega_1\times\Omega_2\times\Omega_3}\left(a\nabla (r^i_n\otimes s^i_n\otimes
    t^i_n)\right)(r\otimes s^i_n\otimes
  t^i_n)&=[\mathbf{s^i_n}^TM\mathbf{s^i_n}][\mathbf{t^i_n}^TM\mathbf{t^i_n}]B\mathbf{r^i_n}+\left(\frac{\rho_{1,2}\sigma_1\sigma_2}{2}[\mathbf{s^i_n}^TC\mathbf{s^i_n}][\mathbf{t^i_n}^TM\mathbf{t^i_n}]\right. \\
    &\left. +\frac{\rho_{1,3}\sigma_1\sigma_3}{2}[\mathbf{t^i_n}^TC\mathbf{t^i_n}][\mathbf{s^i_n}^TM\mathbf{s^i_n}]\right)D\mathbf{r^i_n}+\left(\frac{\rho_{1,2}\sigma_1\sigma_2}{2}[\mathbf{t^i_n}^TM\mathbf{t^i_n}][\mathbf{s^i_n}^TD\mathbf{s^i_n}]\right. \\
    &\left. +\frac{\rho_{1,3}\sigma_1\sigma_3}{2}[\mathbf{t^i_n}^TD\mathbf{t^i_n}][\mathbf{s^i_n}^TM\mathbf{s^i_n}]\right)C\mathbf{r^i_n}+\left([\mathbf{t^i_n}^TM\mathbf{t^i_n}][\mathbf{s^i_n}^TB\mathbf{s^i_n}]\right.\\
&\left.+\frac{\rho_{2,3}\sigma_2\sigma_3}{2}[\mathbf{t^i_n}^TC\mathbf{t^i_n}][\mathbf{s^i_n}^TD\mathbf{s^i_n}]+[\mathbf{t^i_n}^TB\mathbf{t^i_n}][\mathbf{s^i_n}^TM\mathbf{s^i_n}]\right. \\
&\left.+\frac{\rho_{2,3}\sigma_2\sigma_3}{2}[\mathbf{t^i_n}^TD\mathbf{t^i_n}][\mathbf{s^i_n}^TC\mathbf{s^i_n}]\right)M\mathbf{r^i_n}
\end{split}
  \end{equation*}
\normalsize

\noindent where the matrices $M,L,B,C,D$ are explicitly computable tridiagonal matrices of size $N\times N$,
with $N$ the number of intervals in each direction. The computation of the
components for these matrices boils down to one-dimensional integrals.

In this way, solving~\eqref{bs_euler_lagrange} with a fixed point procedure
allows us to obtain for a fixed $i$ such that $1\leq i\leq N$, the $n$-th term of the sum $\sum^n_{k=1}r^i_k\otimes s^i_k\otimes t^i_k$ which is an approximation of the solution at time $t_i=i\Delta t$ of the problem~\eqref{eq:op_bs_edp_x}.

\section{Numerical results}
\label{section:numerical_results}

\subsection{Testing the method against an analytical solution}
\label{section:test_analytical_sol}

In this part, we apply our greedy algorithm~\eqref{min_pb_bs_imex_greedy} to solve the problem~\eqref{eq:op_bs_edp_x} with the following initial condition:
\begin{equation}
  \label{eq:init_cond_analytic_sol}
  u(0,x_1,\ldots ,x_d)=\left(K-\prod^d_{i=1}\frac{x_i}{(1-x_i)}\right)^+
\end{equation}
for which the solution is analytically known.

Using the Feynman-Kac theorem~\eqref{eq:feynman_kac} we get that the solution of the PDE~\eqref{eq:std_bs_fwd} is given by
$\mathbb{E}\left[e^{-rT}(K-\prod^d_{i=1}S^i_T)^+\right]$, which is
possible to calculate analytically in the Black-Scholes model. We have:

\begin{equation}
\label{eq:pricing_analytic_proof}  
\mathbb{E}\left[e^{-rT}(K-\prod^d_{i=1}S^i_T)^+\right] = 
e^{-rT}K\mathbb{P}\left(K>\prod^d_{i=1}S^i_T\right) -
e^{-rT}\mathbb{E}\left[\prod^d_{i=1}S^i_T\mathbf{1}_{\left\{K>\prod^d_{i=1}S^i_T\right\}}\right] 
\end{equation}

We get the quantity $\mathbb{P}\left(K>\prod^d_{i=1}S^i_T\right)$ as follows:

\begin{eqnarray*}
  \mathbb{P}\left(K>\prod^d_{i=1}S^i_T\right)& = & \mathbb{P}\left(e^{\sum^d_{i=1}X^i_T}<\frac{K}{\prod^d_{i=1}S^i_0}\right) \\
  & = & \mathbb{P}\left(Y<\log\left(\frac{K}{\prod^d_{i=1}S^i_0}\right)\right)
\end{eqnarray*}

where $X^i_T=(r-\frac{\sigma^2_i}{2})T+\sigma_iW^i_T$ and
$Y=\sum^d_{i=1}X^i_T$ is a normal random variable with mean equals to
$\sum^d_{i=1}\left(r-\frac{\sigma^2_i}{2}\right)T$ and variance given by
$\sum^d_{i=1}\sum^d_{j=1}\rho_{ij}\sigma_i\sigma_jT$.

Besides, we remark that:

\begin{equation*}
\mathbb{E}\left[\prod^d_{i=1}S^i_T\mathbf{1}_{\left\{K>\prod^d_{i=1}S^i_T\right\}}\right]=\mathbb{E}\left[e^Y\mathbf{1}_{\{e^Y<\frac{K}{\prod^d_{i=1}S^i_0}\}}\right]\prod^d_{i=1}S^i_0  
\end{equation*}

\noindent where the last term can be calculated analytically. We remark that
the analytic solution~\eqref{eq:pricing_analytic_proof} is not separable with respect to each coordinate.

We present in figure~\ref{analytical_sol_T_2D} a numerical example of the
solution obtained with our algorithm and the analytic solution. In
figure~\ref{analytical_sol_T_2D_cut_plane} we see the same surface but
intersected with the plane $x_1=x_2$.

\begin{figure}[htbp]
 \centering
 \includegraphics[width=0.5\linewidth]{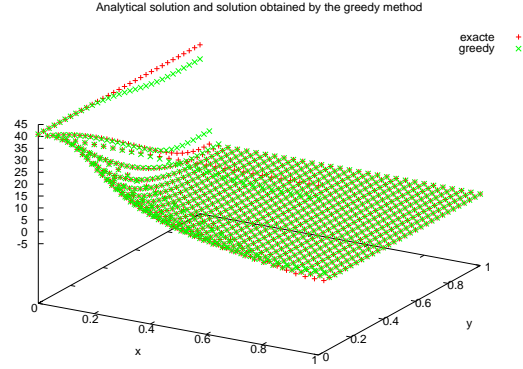} 
 \caption{The analytical solution and the numerical one obtained with our method for the problem~\eqref{eq:std_bs_fwd} with the initial condition~\eqref{eq:init_cond_analytic_sol} in a two-dimensional framework. For this example, we consider $\Delta t=\frac{1}{100}$ and $\Delta x=\frac{1}{30}$.}
\label{analytical_sol_T_2D}
\end{figure}

\begin{figure}[htbp]
 \centering
 \includegraphics[width=0.5\linewidth]{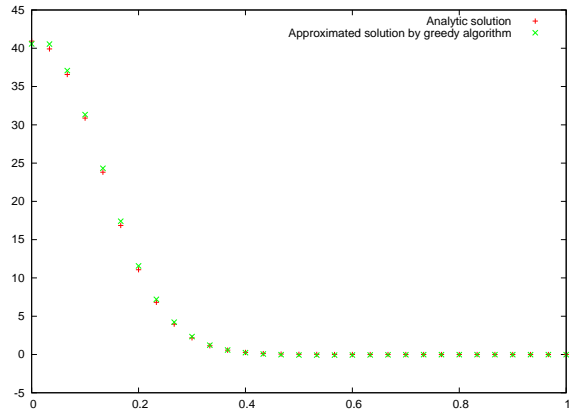} 
 \caption{The analytical solution and the numerical one obtained with our
   method for the problem~\eqref{eq:std_bs_fwd} with the initial
   condition~\eqref{eq:init_cond_analytic_sol} in a two-dimensional
   framework. We represent the intersection between the surface in figure~\ref{analytical_sol_T_2D}
   and the plane $x_1=x_2$. For this example, we consider $\Delta t=\frac{1}{100}$ and $\Delta x=\frac{1}{30}$.}
\label{analytical_sol_T_2D_cut_plane}
\end{figure}

Figure~\ref{cv_curves_analytical_sol} shows the convergence curves, i.e, the $L^2$ relative error with respect to the number of iterations of the algorithm according to the dimension. We note that the number of iterations needed to obtain convergence increases as the dimension of the problem increases.

\begin{figure}[htbp]
 \centering
 \includegraphics[width=0.5\linewidth]{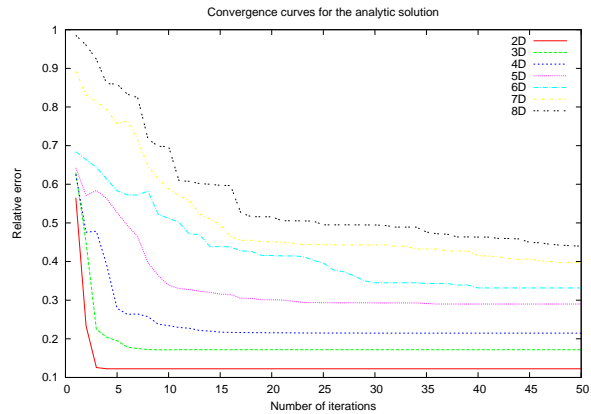} 
 \caption{Convergence curves for the solution at time $T$ of the equation~\eqref{eq:std_bs_fwd} with initial condition given by~\eqref{eq:init_cond_analytic_sol}. To obtain this curves, we consider $\Delta x=0.1$ for each dimension and $\Delta t=0.01$.}
\label{cv_curves_analytical_sol}
\end{figure}

\subsection{Results on the Black-Scholes equation}
\label{section:black_scholes_results}

In this section, we show the results that we obtained applying our greedy algorithm described in the previous section to the Black-Scholes equation.

Figure~\ref{greedy_bs_sol_T_2D} represents the approximation of the solution at time $T$ to the problem~\eqref{eq:std_bs_fwd} with initial condition~\eqref{eq:init_cond} obtained using our greedy algorithm defined by the equations~\eqref{bs_euler_lagrange}

\begin{figure}[htbp]
 \centering
 \includegraphics[width=0.5\linewidth]{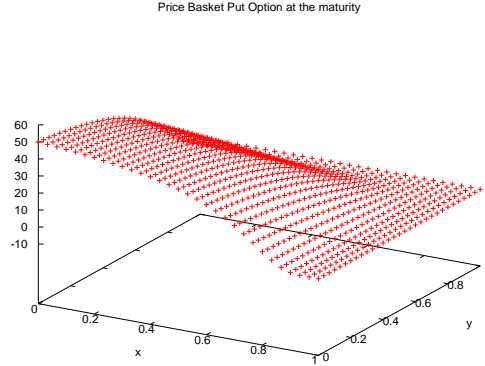} 
 \caption{The approximated solution obtained with our method for the problem~\eqref{eq:std_bs_fwd} with the initial condition~\eqref{eq:init_cond}.}
\label{greedy_bs_sol_T_2D}
\end{figure}

Figure~\ref{4D_prices} represent the price of a basket
put option when all the assets take the same value,$\text{ i.e. }S_1=\ldots
=S_d$. Precisely, Figure~\ref{4D_prices} compares prices obtained with different discretizations $\Delta x$.

\begin{figure}[htbp]
 \centering
 \includegraphics[width=0.5\linewidth]{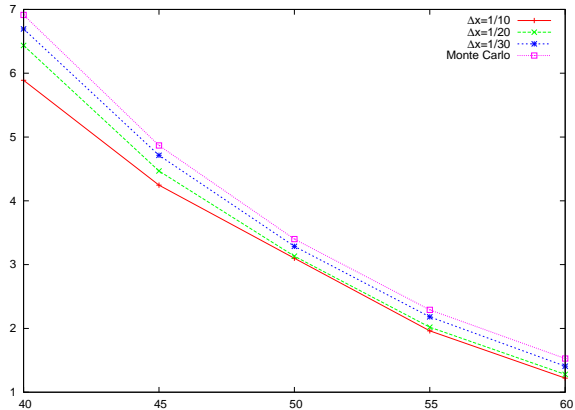} 
 \caption{The approximated solution obtained with our method for the problem~\eqref{eq:std_bs_fwd} with the initial condition~\eqref{eq:init_cond} in a four-dimensional framework. For these results, we set $\Delta x = 0.1,0.2$ and $\Delta t = 0.01$.}
\label{4D_prices}
\end{figure}

In terms of computational time, the greedy
approach~\eqref{min_pb_bs_imex_greedy} is not competitive compared to Monte
Carlo methods when one is interested in the price of only one value of the
spot, but on the other side the curve of prices is obtained for any
time $t\in[0,T]$ and any price spot.  

\subsection{Application as a variance reduction method}
\label{section:variance_reduction}

In this part, we show that we can use the solution obtained by the greedy
method described above in order to find a control variable to reduce the
variance when calculating the price an of option.

We can re-write the equations~\eqref{dyn_bs} and~\eqref{correl_browniens} which
define the Black-Scholes model as follows:

\begin{equation}
  \label{eq:dyn_brown_indep}
  \frac{dS^i_t}{S^i_t}=rdt+\sigma^i\sum^d_{j=1}H_{i,j}dW^j_t
\end{equation}

\noindent where W is a $d$-dimensional Brownian motion and the matrix $H$ verifies $HH^t=\Sigma$ where $\Sigma$ is a $d\times d$ matrix such that $\Sigma_{i,j}=1$ if $j=i$ and $\Sigma_{i,j}=\rho_{ij}$ otherwise.

We recall that the price of a basket put option is given by

\begin{equation*}
  \mathbb{E}\left[e^{-rT}f(S^1_T,S^2_T,\ldots ,S^d_T)\right]
\end{equation*}

\noindent where $f(S^1_T,S^2_T,\ldots ,S^d_T)=\left(K-\frac{1}{d}\sum^d_{i=1}S^i_T\right)_+$.

Now, for the sake of simplicity, let us consider $r=0$. Generalization to
$r\neq 0$ is straightforward. Let us introduce the Kolmogorov equation:
\begin{eqnarray}
  \label{eq:kolmogorov_back}
  \begin{cases}
  \partial_t \hat{P}-\frac{1}{2}A:\nabla^2 \hat{P} = 0 \\
  \hat{P}(0,x)=f(x)
\end{cases}
\end{eqnarray}

\noindent where $A=FH(FH)^T$ and $F$ is a diagonal matrix such that $F_{i,i}=\sigma^iS^i$ for $i=1,\ldots ,d$. Notice that 

$$\hat{P}(T,S_0)=\mathbb{E}\left[f(S^1_T,S^2_T,\ldots ,S^d_T)\right].$$

Therefore, we have

\begin{equation*}
  \hat{P}(0,S_T)-\hat{P}(T,S_0)=\int^T_0FH\;\nabla \hat{P}(T-t,S_t)dB_t
\end{equation*}

\noindent and thus,

\begin{equation}
  \label{eq:price_var_reduct}
  \hat{P}(T,S_0)=f(S_T)-\int^T_0 FH\;\nabla \hat{P}(T-t,S_t)dB_t
\end{equation}

The random variable $Y=\int^T_0 FH\;\nabla \hat{P}(T-t,S_t)dB_t$ has zero mean and
is a perfect control variable since

\begin{equation*}
  \text{Var}\left[f(S_T)-Y\right]=0.
\end{equation*}

As we do not know the solution $\hat{P}$, in practice, we calculate an approximation
$\hat{P}^\star$ of $\hat{P}$ using the greedy algorithm presented in Section~\ref{section:black_scholes_pde_gral}. Therefore, we obtain an
approximated control variable $Y^\star=\int^T_0 FH\;\nabla
\hat{P}^\star(T-t,S_t)dB_t$ and we can compute an approximation of $\hat{P}(T,S_0)$ by
Monte Carlo, computing the following quantity:

\begin{equation*}
 \mathbb{E}\left[f(S_T)-\int^T_0 FH\;\nabla \hat{P}^\star(T-t,S_t)dB_t\right].
\end{equation*}

We remark that this idea can be applied to any payoff function and that for a
new value of $S_0$ we use the same approximation $\hat{P}^\star$.

In Table~\ref{table_var_reduct_1} and~\ref{table_var_reduct_2} we present the
performance of our variance reduction method compared with the variance
obtained with the classical method, i.e. calculating
directly $\mathbb{E}[f(S_T)]$. 

\begin{table}
\begin{center}
\begin{tabular}{|c|c|c|}
  \hline
  Dimension & Without variance reduction & With variance reduction\\
  \hline
  4 &0.1233 &0.0012 \\
  5 &0.1204 &0.0034 \\
  6 &0.1197 &0.0078 \\
  7 &0.1245 &0.0113 \\
  8 &0.1257 &0.0254 \\
  \hline
\end{tabular}
\caption{Variance with a correlation parameter
  $\rho_{i,j}=0.9$ constant between all the assets.\label{table_var_reduct_1}}
\end{center}
\end{table}

\begin{table}
\begin{center}
\begin{tabular}{|c|c|c|}
  \hline
  Dimension & Without variance reduction & With variance reduction\\
  \hline
  4 &0.1256 &0.0023 \\
  5 &0.1248 &0.0045 \\
  6 &0.1230 &0.0096 \\
  7 &0.1199 &0.0158 \\
  8 &0.1232 &0.0296 \\
  \hline
\end{tabular}
\caption{Variance with a correlation parameter $\rho_{i,j}=0.1$
  constant between all the assets.\label{table_var_reduct_2}}
\end{center}
\end{table}

For two typical values of the correlation, we observe that the reduction of
the variance is important, for example, up to a factor 6 in dimension $8$.

\newpage

\bibliography{reference}
\bibliographystyle{plain}
    
\end{document}